\newtheorem{remark}{Remark}[section]
\renewcommand{\ldots}{\dotsc}
\newtheorem{model-problem}{Problem}
\newcommand{\br}{\textbf{r}}
\newcommand{\bv}{\textbf{v}}
\def\SS{{\mathcal S}}
\def\W{{\mathcal W}}
\def\S{{\mathcal Q_{h}}}
\def\dt{D_{\boldsymbol{\tau}}}
\def\L{{\mathcal L}}
\def\bn{{\bf n}}
\def\3bar{{|\hspace{-.02in}|\hspace{-.02in}|}}
\def\bbq{\begin{equation*}}
\def\eeq{\end{equation*}}
\def\br{\begin{eqnarray}}
\def\er{\end{eqnarray}}
\def\brr{\begin{eqnarray*}}
\def\err{\end{eqnarray*}}
\def\O{\Omega}
\def\grad{\nabla}
\def\pa{\partial}
\def\bn{{\bf n}}
\def\3bar{{|\hspace{-.02in}|\hspace{-.02in}|}}
\def\bmu{{\boldsymbol\mu}}
\newtheorem{PD-WG}{SIMPLIFIED PRIMAL-DUAL WEAK GALERKIN ALGORITHM}
\begin{document}

\setlength{\parindent}{0.25in} \setlength{\parskip}{0.08in}

\title{A Simplified primal-dual weak Galerkin finite element method for Fokker-Planck Type Equations}
\author{
Dan Li\thanks{School of Mathematics and Statistics, Northwestern Polytechnical University, Xi'an, Shannxi 710072, China.} \and
Chunmei Wang \thanks{Department of Mathematics \& Statistics, Texas Tech University, Lubbock, TX 79409, USA (chunmei.wang@ttu.edu). The research of Chunmei Wang was partially supported by National Science Foundation Award DMS-1849483.}
}
\maketitle

\begin{abstract}
A simplified primal-dual weak Galerkin (S-PDWG) finite element method is designed for the Fokker-Planck type equation with non-smooth diffusion tensor and drift vector. The discrete system resulting from S-PDWG method has significantly fewer degrees of freedom compared with the one resulting from the PDWG method proposed by Wang-Wang \cite{WW-fp-2018}. Furthermore, the condition number of the S-PDWG method is smaller than the PDWG method \cite{WW-fp-2018} due to the introduction of a new stabilizer, which provides a potential for designing fast algorithms. Optimal order error estimates for the S-PDWG approximation are established in the $L^2$ norm. A series of numerical results are demonstrated to validate the effectiveness of the S-PDWG method.
\end{abstract}

\begin{keywords}
primal-dual, weak Galerkin,  finite element method,  discrete weak gradient, Fokker-planck equation, polyhedral meshes.
\end{keywords}

\begin{AMS}
Primary 65N30, 65N12, 65N15; Secondary 35Q35, 76R50.
\end{AMS}

\section{Introduction}
We consider the Fokker-Planck type model problem with homogeneous Dirichlet boundary condition which seeks $u$ such that
\begin{equation}\label{model-problem}
\begin{split}
\grad\cdot(\bmu u)-\frac{1}{2}\sum_{i,j=1}^{d}\pa_{ij}^{2}(a_{ij}u)&=f, \quad \mbox{in}~~ \O,\\
u&=0,\quad \mbox{on}~~ \pa\O ,
\end{split}
\end{equation}
where $\Omega$ is an open bounded domain in $\mathbb{R}^d$ $(d=2, 3)$ with Lipschitz continuous boundary $\pa \Omega$ and $f \in L^2(\Omega)$.   We assume that the diffusion tensor $a(x)=\{a_{ij}(x)\}_{d\times d}\in [L^{\infty}(\Omega)]^{d\times d}$ is symmetric and uniformly positive definite in the domain $\O$, and the drift vector is $\pmb{\mu}\in [L^{\infty}(\O)]^{d}$. Note that  the diffusion tensor $a(x)$ and the drift vector $\pmb{\mu}$ are non-smooth functions so that the exact solution $u$ possesses discontinuities. Throughout this paper, we assume that  the diffusion tensor $a(x)$ and the drift vector $\pmb{\mu}$ are piecewise smooth functions in the domain $\Omega$.

The Fokker-Planck problem arises in science and technology such as statistics, physics, engineering and biological system \cite{Fokker-1914,Gardiner-1985,Perthame2007,Risken1989}.
The Fokker-Planck equation has been numerically solved by finite difference methods \cite{F-p_PZ2018}, finite element methods  \cite{BW1996,BS1968,KN-2006,f-p_DG1985,f-p_FEM1991}, discontinuous Galerkin methods \cite{f-p_DG2016}, spectral methods \cite{MS-2018} and  primal-dual weak Galerkin method \cite{WW-fp-2018}.  
For an efficient implementation, researchers are devoted to designing numerical methods to decrease the degrees of freedom of the discrete system. \cite{ellip_MWY2015} proposed an optimal combination for the polynomial space to reduce the number of unknowns. The Schur complement was proposed in \cite{ellip_MWYZ2015,ellip_MWY2017}, where the unknowns of the numerical system were only defined on the element boundary. \cite{WY-ellip_MC2014,WY_ZZ-2015,MawellLI,WW_divcur-2016} developed a technique to employ the tangential component and/or normal component of the unknown vector in the numerical systems.
A stabilizer-free weak Galerkin finite element method was developed in \cite{ellip_XYZ2020}.

The PDWG method has been successfully applied to several challenging problems including the second order elliptic equation in non-divergence form \cite{PDWG_MC2017}, Fokker-Planck equation \cite{WW-fp-2018}, the elliptic Cauchy problem \cite{ellipCau_WW2020}, the first-order transport problem \cite{wwhyperbolic}, the linear convection equation \cite{Lwwhyperbolic}, and the div-curl system \cite{LW-divcurl-2020}.
The essential idea of the PDWG method is to formulate the original equation into a constraint optimization problem in the weak Galerkin framework \cite{wybasis,ellip_JCAM2013}. The similar framework was also proposed  by Burman \cite{Burman2013, burman2014} which was named stabilized finite element methods. In the S-PDWG method, the gradient vector is first decomposed into the tangential and normal components, and only the normal component is counted into the degrees of freedom. This leads to a significant reduction of computational complexity of the S-PDWG numerical scheme compared with  the PDWG method proposed by Wang-Wang \cite{WW-fp-2018}. In addition, the newly-introduced stabilizer in S-PDWG is reversible such that the condition number  resulting from the S-PDWG numerical scheme is relatively small. This feature provides a potential for designing effective fast algorithms. This work is a non-trivial extension of PDWG method for solving the Fokker-Planck equation developed by Wang-Wang \cite{WW-fp-2018}.

The paper is organized as follows. In Section \ref{Preliminaries}, we present a weak formulation for the Fokker-Planck problem \eqref{model-problem} and its dual problem. Section \ref{Section:WeakGradient-definition} briefly reviews the definition of weak differential operators and their discrete versions. In Section \ref{Section:numerical scheme}, we propose a S-PDWG  algorithm for  the Fokker-Planck problem \eqref{model-problem}. Section \ref{Section:well-posed} is devoted to establishing the inf-sup condition and the solution existence and uniqueness of the S-PDWG scheme. The error equations for the S-PDWG scheme are derived in Section \ref{Section:error equation}. In Section \ref{Section:error estimate for projections}, the technical estimates are derived which play an important role in deriving the error estimates in Section \ref{Section:error estimate}. Section \ref{Section:error estimate} is dedicated to establishing an optimal order of error estimates in the $L^2$ norm. Finally, a series of numerical results are reported to demonstrate the effectiveness of the  S-PDWG method.

\section{Weak Formulations}\label{Preliminaries}
We shall follow the standard definitions for the Sobolev spaces and norms \cite{Ciarlet-2002,Gilbarg-1983}. Let $D\subset\mathbb{R}^d$ be any bounded domain with Lipschitz continuous boundary $\pa D$. We shall use $(\cdot,\cdot)_{s,D}$, $\|\cdot\|_{s,D}$ and $|\cdot|_{s,D}$ to denote the inner product, norm and semi-norm for the Sobolev space $H^s(D)$ for any $s\geq 0$, respectively. For $D=\O$, we shall drop the corresponding subscript $D$ in the semi-norm, norm and inner product notations. For $s=0$, the space $H^0(D)$ coincides with $L^2(D)$; the semi-norm, norm and inner product are denoted by $|\cdot|_D$, $\|\cdot\|_D$ and $(\cdot, \cdot)_D$, respectively.  We use ``$\lesssim$'' to denote ``less than or equal to up to a general constant independent of the meshsize or functions in the inequality''.

Denote by $\pmb{\L}: H^2(\O) \to  L^2(\O)$ the bounded linear operator defined by
$$\pmb{\L} v=\bmu\cdot \nabla v+\frac{1}{2}\sum_{i,j=1}^{d}a_{ij}\pa_{ji}^{2}v.$$ A weak formulation for the Fokker-Planck model problem \eqref{model-problem} seeks a function $u\in L^2(\O)$ satisfying $u=0$ on $\pa \O$ such that
 \begin{equation}\label{weak formulation}
(u,\pmb{\L} v)=-(f,v),\quad \forall v\in H^2(\O)\cap H_{0}^{1}(\O).
\end{equation}

For the well-posedness of the weak formulation \eqref{weak formulation}, we assume the second order differential operator $\pmb{\L}$ satisfies the $H^2$-regularity property in the sense that there exists an unique strong solution $\Phi\in H^2(\O)\cap H_{0}^{1}(\O)$ such that
\begin{equation}\label{regurality operator}
\pmb{\L}\Phi=\chi,\quad \|\Phi\|_2\lesssim \|\chi\|,
\end{equation}
for any given $\chi\in L^2(\O)$ \cite{Talenti-1965, WW-fp-2018}.

The dual problem for this weak formulation \eqref{weak formulation} reads: Find $\rho\in H^2(\O)\cap H_{0}^{1}(\O)$ such that
\begin{equation}\label{dual formulation}
(w,\pmb{\L}\rho)=0,\quad \forall w\in L^2(\O).
\end{equation}
The $H^2$-regularity assumption (\ref{regurality operator}) for the differential operator $\pmb{\L}$ indicates that the dual problem \eqref{dual formulation} has one and only one trivial solution $\rho=0$.

\section{Weak Partial Derivatives and Discrete Weak Partial Derivatives}\label{Section:WeakGradient-definition}

The goal of this section is to review the definitions of weak second order partial derivative and weak gradient, as well as their corresponding discrete versions \cite{WW_bihar-2014, ellip_JCAM2013}.

Let $T$ be any polygonal or polyhedral domain with boundary $\pa T$. A weak function $v$ on $T$ refers to a triplet $\{v_{0},v_{b},\pmb{v_{g}}\}$
such that $v_{0}\in L^{2}(T)$, $v_{b}\in L^{2}(\pa T)$ and $\pmb{v_{g}}\in [L^{2}(\pa T)]^{d}$. The first and second components $v_{0}$
and $v_{b}$ represent the value of $v$ in the interior and on the boundary of $T$, respectively; the third component $\pmb{v_{g}}=[v_{g1},...,v_{gd}]'$ intends to represent the value of $\nabla v$ on $\pa T$. Note that $v_b$ and $\pmb{v_{g}}$ are not necessarily related to the traces of $v_0$ and $\nabla v_0$ on $\pa T$, respectively. Denote by ${\W}(T)$ the space of all weak functions on $T$; i.e.,
\begin{eqnarray*}\label{weak function space}
{\W}(T)=\{v=\{v_{0},v_{b},\pmb{v_{g}}\}:v_{0}\in L^{2}(T),v_{b}\in L^{2}(\pa T),\pmb{v_{g}}\in[L^{2}(\pa T)]^{d}\}.
\end{eqnarray*}
Let $P_{r}(T)$ be the space of polynomials on the element $T$ with degree no more than $r$.

\begin{definition}\cite{WW_bihar-2014}(Weak second order partial derivative)
The weak second order partial derivative of any weak function $v\in {\W}(T)$, denoted by $\pa_{ij,w}^{2}v$ ($i, j=1, \cdots, d$), is defined as a bounded linear functional in the dual space of $H^{2}(T)$ such that
\begin{eqnarray}\label{weak partial}
(\pa_{ij,w}^{2}v,\varphi)_{T}=:(v_{0},\pa_{ji}^{2}\varphi)_{T}-\langle v_{b}n_{i},\pa_{j}\varphi\rangle_{\pa T}+\langle v_{gi},\varphi n_{j}\rangle_{\pa T},
\end{eqnarray}
for any $\varphi\in H^{2}(T)$. Here, $\pmb{n}=(n_{1},...,n_{d})'$ is the unit outward normal direction to $\pa T$.
\end{definition}
\begin{definition}\cite{ellip_JCAM2013}(Weak gradient)
The weak gradient of any weak function $v\in {\W}(T)$, denoted by $\nabla_{w}v$, is defined as a linear functional in the dual space of $[H^{1}(T)]^{d}$
such that
\begin{eqnarray}\label{weak gradient}
(\nabla_{w}v,\pmb{\psi})_{T}=:-(v_{0},\nabla\cdot\pmb{\psi})_{T}+\langle v_{b},\pmb{\psi}\cdot\mathbf{n}\rangle_{\pa T},\quad\forall\pmb{\psi}\in [H^{1}(T)]^{d}.
\end{eqnarray}
\end{definition}

\begin{definition}\cite{WW_bihar-2014} (Discrete weak second order partial derivative)
The discrete weak second order partial derivative of any weak function $v\in {\W}(T)$, denoted by $\pa_{ij,w,r,T}^{2}v$, is defined as a unique polynomial in $P_{r}(T)$ satisfying
\begin{eqnarray}\label{discrete weak partial derivetive}
(\pa_{ij,w,r,T}^{2}v,\varphi)_{T}=(v_{0},\pa_{ji}^{2}\varphi)_{T}-\langle v_{b}n_{i},\pa_{j}\varphi \rangle_{\pa T}+\langle v_{gi},\varphi n_{j}\rangle_{\pa T},\quad\forall \varphi\in P_{r}(T).
\end{eqnarray}
\end{definition}
Applying the usual integration by parts to \eqref{discrete weak partial derivetive} yields
\begin{equation}\label{discrete weak partia derivetive l1-2}
\begin{split}
(\pa_{ij, w, r, T}^{2}v,\varphi)_{T}=(\pa_{ij}^{2}v_{0},\varphi)_{T}+\langle (v_{0}-v_{b})n_{i},\pa_{j}\varphi \rangle_{\pa T}
-\langle \pa_{i}v_{0}-v_{gi},\varphi n_{j}\rangle_{\pa T}.
\end{split}
\end{equation}
\begin{definition}\cite{ellip_JCAM2013} (Discrete weak gradient)
The discrete weak gradient of any weak function $v\in {\W}(T)$, denoted by $\nabla_{w,r,T}v$, is defined as a unique polynomial vector in $[P_{r}(T)]^{d}$ such that
\begin{eqnarray}\label{discrete weak gradient1-1}
(\nabla_{w,r,T}v,\boldsymbol{\psi})_{T}=-(v_{0},\nabla\cdot\boldsymbol{\psi})_{T}+\langle v_{b}, \boldsymbol{\psi}\cdot\mathbf{n}\rangle_{\pa T},\quad \forall\boldsymbol{\psi}\in [P_{r}(T)]^{d}.
\end{eqnarray}
\end{definition}
Applying the usual integration by parts to \eqref{discrete weak gradient1-1} gives rise to
\begin{eqnarray}\label{discrete weak gradient1-2}
(\nabla_{w,r,T}v,\boldsymbol{\psi})_{T}=(\nabla v_{0},\boldsymbol{\psi})_{T}-\langle v_{0}-v_{b},\boldsymbol{\psi}\cdot\boldsymbol{n}\rangle_{\pa T},\quad \forall\boldsymbol{\psi}\in [P_{r}(T)]^{d}.
\end{eqnarray}

\section{Simplified Primal-Dual Weak Galerkin Methods}\label{Section:numerical scheme}

Let ${\cal T}_{h}$ be a finite element partition of the domain $\O$ into polygons in $2D$ or polyhedra in $3D$ which is shape regular as described in \cite{WY-ellip_MC2014}. Denote by $\mathcal{E}_h$ the set of all edges/flat faces in ${\cal T}_{h}$, by $\mathcal{E}_h^{0}$ the set of all interior edges/faces of ${\cal T}_{h}$. The meshsize of ${\cal T}_{h}$ is $h=\max_{T\in{\cal T}_{h}}h_{T}$ with $h_T$ being the meshsize of the element $T\in {\cal T}_h$.

Note that $\nabla v$ can be decomposed into its tangential and normal components; i.e.,
\begin{eqnarray*}\label{vg}
\begin{split}
\nabla v=(\nabla v\cdot\bn)\bn +\bn\times(\nabla v\times\bn).
\end{split}
\end{eqnarray*}
We introduce a set of assigned unit normal vectors on $e\in\mathcal{E}_h$, denoted by $N_{h}$; i.e.,
$$
N_{h}=\{\mathbf{n_{e}}:\mathbf{n_{e}} \;is\;an \; assigned \;normal\;vector \;to\; e,\; \forall \ e\in\mathcal{E}_h\}.
$$
Similarly, $\pmb{v_{g}}$ can be decomposed into its tangential and normal components; i.e.,
$$
\pmb{v_{g}}=v_n \bn_e+{\dt}v_{b} \pmb{\tau}.
$$
where $v_n=\nabla v\cdot\bn_e$, ${\dt}v_{b}$ is the value of $\nabla v_b$ in the tangential direction $\pmb{\tau}$ with $\pmb{\tau}=(\tau_1,\ldots,\tau_d)'$ being the tangential vector to $e\subset \pa T$, and $\bn_{e}\in N_{h}$.

For any integer $k\geq1$, the local weak function space on each element $T$ is given by
\begin{equation*}
\begin{split}
V_{k}(T)=\{\{v_{0},v_{b},\pmb{v_{g}}=&v_{n}\bn_{e}+{\dt}v_{b} \pmb{\tau}\},v_{0}\in P_{k}(T),v_{b} \in P_{k}(e),\\
&v_{n}\in P_{k-1}(e), \bn_{e}\in N_{h}, e\subset \pa T\}.
\end{split}
\end{equation*}
Patching $V_{k}(T)$ over all the elements $T\in {\cal T}_{h}$ through a common value $v_{b}$ on the interior edges/faces $e\in \mathcal{E}_h$ yields  a global weak finite element space $V_{h, k}$; i.e.,
$$V_{h,k}=\{v=\{v_{0},v_{b},\pmb{v_{g}}=v_{n}\bn_{e}+{\dt}v_{b} \pmb{\tau}\}:v|_{T}\in V_{k}(T),T\in {\cal T}_{h}\}.$$
We further introduce a subspace of $V_{h, k}$ with homogeneous boundary conditions, denoted by $V_{h, k}^{0}$; i.e.,
$$V_{h, k}^{0}=\{v=\{v_{0},v_{b},\pmb{v_{g}}=v_{n}\bn_{e}+{\dt}v_{b} \pmb{\tau}\}\in V_{h, k}:v_{b}|_{e}=0,e\subset\pa\O\}.$$

For any given integer $s\geq0$, let $W_{h,s}$ be the finite element space given by
$$
W_{h, s}=\{w:w|_{T}\in P_{s}(T),s=k-1~\text{or}~k-2,\;T\in {\cal T}_{h}\}.
$$
When it comes to the lowest order $k=1$, the only option is $s=0$.

For the convenience of notation, denote by $\nabla_{w}$ the discrete weak gradient $\nabla_{w, k-1,T}$  computed by (\ref{discrete weak gradient1-1}) on each element $T$ with $r=k-1$; i.e.,
$$(\nabla_{w}v)|_T=\nabla_{w,k-1,T}(v|_T), \quad v\in V_{h,k}.$$
Similarly,  denote by $\pa_{ij,w}^{2}v$ the discrete weak second order partial derivative $\pa_{ij,w,s,T}^{2}$ computed by \eqref{discrete weak partial derivetive} on each element $T$ with $r=s$; i.e.,
$$(\pa_{ij,w}^{2}v)|_T=\pa_{ij,w,s,T}^{2}(v|_T),\quad v\in V_{h,k}.$$
The differential operator $\pmb{\L}$ is thus discretized by
$$\pmb{\L}_{w}(v)=\pmb{\mu}\cdot\nabla_{w}v+\frac{1}{2}\sum_{i,j=1}^{d}a_{ij}\pa_{ji,w}^{2}v.$$

On each element $T\in\mathcal{T}_h$, we introduce two bilinear forms as follows
\begin{eqnarray*}
{\SS}(\rho,\sigma)&=&\sum_{T\in\mathcal{T}_h}s_{T}(\rho,\sigma)+\sum_{T\in\mathcal{T}_h}c_{T}(\rho,\sigma),\quad \forall  \rho,\sigma\in V_{h, k},\\
 b(\sigma,v)&=&\sum_{T\in\mathcal{T}_h}b_{T}(\sigma,v),\qquad\qquad \qquad\forall \sigma\in V_{h,k},v\in W_{h,s},
\end{eqnarray*}
where
\begin{equation}\label{stabilizer}
\begin{split}
s_{T}(\rho,\sigma)=& h_{T}^{-3}\int_{\pa T} (\rho_{0}-\rho_{b})(\sigma_{0}-\sigma_{b})ds\\
&+h_{T}^{-1}\int_{\pa T} (\nabla\rho_{0}\cdot\bn_{e}-\rho_{n})(\nabla\sigma_{0}\cdot\bn_{e}-\sigma_{n})ds+\gamma_1\int_{T}\pmb{\L}\rho_0\pmb{\L}\sigma_0dT,
\end{split}
\end{equation}
\begin{equation}\label{additionc}
\begin{split}
c_{T}(\rho,\sigma)=\gamma_2\int_{T}\nabla\sigma_0\nabla \rho_0dT+\gamma_3\sum_{i,j=1}^{d}\int_{T}\pa_{ij}^{2}\sigma_0\pa_{ij}^{2} \rho_0dT,
\end{split}
\end{equation}
\begin{eqnarray*}
b_{T}(\sigma,v)&=&(v,\pmb{\L}_{w}(\sigma))_{T}\label{bilinear form b},
\end{eqnarray*}
with $\gamma_i \geq 0$ $(i=1,2,3)$ being given parameters.

\begin{PD-WG} The numerical scheme for the Fokker-Planck problem \eqref{model-problem} is as follows: Find $(u_{h};\rho_{h})\in W_{h,s}\times V_{h,k}^{0}$ such that $u_h=0$ on $\pa \O$ satisfying
\begin{eqnarray}
{\SS}(\rho_{h},\sigma)+b(\sigma,u_{h})&=&-(f,\sigma_{0}),\qquad \forall \sigma\in V_{h,k}^{0},\label{PDWG-scheme1}\\
b(\rho_{h},v)&=&0,\qquad \qquad\, \quad\forall v\in W_{h,s}\label{PDWG-scheme2}.
\end{eqnarray}
\end{PD-WG}

\begin{remark} 
For the case of piecewise smooth diffusion tensor $a(x)$, $\gamma_1>0$ is required; for the case of piecewise constant diffusion tensor $a(x)$, $\gamma_1=0$ is a feasible option. For the case of $|\gamma_2|+|\gamma_3|> 0$, the stabilizer $c(\cdot, \cdot)$ is reversible and provides a relatively small condition number for the S-PDWG scheme. Therefore, S-PDWG is advantageous in designing fast algorithms. 
\end{remark}

\section{Solution Existence, Uniqueness and Stability}\label{Section:well-posed}
In this section,  the solution existence and uniqueness of the S-PDWG  scheme \eqref{PDWG-scheme1}-\eqref{PDWG-scheme2} will be demonstrated through Babu\u{s}ka-Brezzi theory \cite{babuska, Brezzi-1974}.

For each element $T\in{\cal T}_{h}$, denote by $Q_{0}$ the $L^{2}$ projection onto $P_{k}(T)$. On each edge or face $e\subset\mathcal{E}_h$, denote by $Q_{b}$ and $Q_{g}$  the $L^2$ projections onto $P_{k}(e)$ and $P_{k-1}(e)$, respectively. For any $w\in H^{2}(\O)$, denote by $Q_{h}w\in V_{h, k}$ the $L^{2}$ projection  such that on each element $T$,
\begin{equation*}\label{projection-operator}
\begin{split}
&Q_{h}w=\{Q_{0}w,Q_{b}w, Q_{g}((\nabla w\cdot\bn_{e}) \bn_{e} +  \bn_{e}\times(\nabla w\times\bn_{e}))\}.
\end{split}
\end{equation*}
Denote by ${\S}^{(s)}$ the $L^{2}$ projection onto $P_s(T)$.

\begin{lemma}\label{commutative properties}
The following commutative properties hold true
\begin{eqnarray}
\pa_{ij,w}^{2}(Q_{h}w)&=&{\S}^{(s)}(\pa_{ij}^{2}w),\quad i,j=1,\ldots,d,\label{commutative pro-1}\\
\nabla_{w}(Q_{h}w)&=&{\S}^{(k-1)}(\nabla w),\label{commutative pro-2}
\end{eqnarray}
for any $w\in H^{2}(T)$.
\end{lemma}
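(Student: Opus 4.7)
The plan is to verify each identity by unwinding the definition of the relevant discrete weak operator applied to $Q_h w$, then using the defining property of the $L^2$ projections $Q_0$, $Q_b$, $Q_g$ to strip them against polynomial test functions, and finally integrating by parts to recover the strong derivative. The key degree-counting observation is that $s \le k-1$, which is precisely what allows every projection to be removed without loss.

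For the weak gradient identity \eqref{commutative pro-2}, I would pick any test vector $\bpsi \in [P_{k-1}(T)]^d$ and apply definition \eqref{discrete weak gradient1-1} with $v = Q_h w$ to obtain
$$(\nabla_{w}(Q_h w),\bpsi)_T = -(Q_0 w, \nabla\cdot\bpsi)_T + \langle Q_b w, \bpsi\cdot\bn\rangle_{\pa T}.$$
Since $\nabla\cdot\bpsi \in P_{k-2}(T) \subset P_k(T)$ and $\bpsi\cdot\bn|_e \in P_{k-1}(e) \subset P_k(e)$, the projections $Q_0$ and $Q_b$ can be removed. One integration by parts then gives $(\nabla w,\bpsi)_T$, which matches the definition of $\SS^{(k-1)}(\nabla w)$ tested against $\bpsi$. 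Uniqueness of the discrete weak gradient yields \eqref{commutative pro-2}.

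For the weak second-order partial derivative identity \eqref{commutative pro-1}, I would start similarly. Observe first that the decomposition $(\nabla w\cdot\bn_e)\bn_e+\bn_e\times(\nabla w\times\bn_e)$ is simply $\nabla w$ itself, so the third component of $Q_h w$ is just $Q_g(\nabla w)$, whose $i$-th entry is $Q_g(\pa_i w)$. Testing against $\varphi\in P_s(T)$ in \eqref{discrete weak partial derivetive} yields
$$(\pa_{ij,w}^2(Q_h w),\varphi)_T = (Q_0 w,\pa_{ji}^2\varphi)_T - \langle Q_b w\, n_i,\pa_j\varphi\rangle_{\pa T} + \langle Q_g(\pa_i w),\varphi n_j\rangle_{\pa T}.$$
Since $s\le k-1$, one has $\pa_{ji}^2\varphi\in P_{s-2}(T)\subset P_k(T)$, $\pa_j\varphi|_e\in P_{s-1}(e)\subset P_k(e)$, and $\varphi n_j|_e\in P_s(e)\subset P_{k-1}(e)$, so all three projections drop. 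Two applications of integration by parts on the volume term $(w,\pa_{ji}^2\varphi)_T$ produce exactly the two boundary contributions that cancel the remaining boundary terms, leaving $(\pa_{ij}^2 w,\varphi)_T$. By the defining property of $\SS^{(s)}$, this is $(\SS^{(s)}(\pa_{ij}^2 w),\varphi)_T$, and uniqueness in $P_s(T)$ gives \eqref{commutative pro-1}.

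The only delicate step is the degree bookkeeping that justifies removing the boundary projection $Q_g$, since it is onto $P_{k-1}(e)$ rather than $P_k(e)$; this is exactly where the restriction $s\in\{k-1,k-2\}$ is used. Otherwise the argument is a routine integration by parts.
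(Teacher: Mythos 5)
Your proof is correct and follows essentially the same route as the paper's: unwind the definition of the discrete weak operator applied to $Q_hw$, strip the projections $Q_0$, $Q_b$, $Q_g$ against polynomial test functions by the degree bookkeeping $s\le k-1$, use the identity $(\nabla w\cdot\bn_e)\bn_e+\bn_e\times(\nabla w\times\bn_e)=\nabla w$, and integrate by parts to recover $(\pa_{ij}^2w,\varphi)_T$. The only cosmetic difference is that you carry out the weak-gradient case \eqref{commutative pro-2} explicitly, whereas the paper delegates it to the cited references.
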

\begin{proof}
It follows from the integration by parts and \eqref{weak partial} that
\begin{equation*}
\begin{split}
&(\pa_{ij, w}^{2}(Q_{h}w),\varphi)_{T}\\
=&(Q_{0}w,\pa_{ji}^{2}\varphi)_{T}-\langle Q_{b}w,n_{i}\pa_{j}\varphi\rangle_{\pa T}+\langle  Q_{g}((\nabla w\cdot\bn_{e}) \bn_{e} +   \bn_{e}\times(\nabla w\times\bn_{e}))_i ,\varphi n_{j}\rangle_{\pa T}\\
=&(w,\pa_{ji}^{2}\varphi)_{T}-\langle w,n_{i}\pa_{j}\varphi\rangle_{\pa T}+\langle ((\nabla w\cdot\bn_{e}) \bn_{e} + \bn_{e}\times(\nabla w\times\bn_{e}))_i,\varphi n_{j}\rangle_{\pa T}\\
=&(w,\pa_{ji}^{2}\varphi)_{T}-\langle w n_{i},\pa_{j}\varphi \rangle_{\pa T}+\langle (\nabla w\cdot\bn_{e}) (\bn_{e})_i,\varphi n_{j}\rangle_{\pa T}\\
&+\langle \pa_{i}w-((\nabla w\cdot\bn_{e})\bn_{e})_i,\varphi n_{j}\rangle_{\pa T}\\
=&(\pa_{ij}^{2}w,\varphi)_T\\
=&({\S}^{(s)}(\pa_{ij}^{2}w),\varphi)_T,
\end{split}
\end{equation*}
for any $\varphi\in P_{s}(T)$, where we used the identity $\bn_e\times(\nabla w\times\bn_e)=\nabla w-(\nabla w\cdot\bn_e)\bn_e$ and the notation $(\cdot)_i$ denotes the $i-$th component of a vector. This completes the proof of \eqref{commutative pro-1}.

\eqref{commutative pro-2} can be proved in a similar fashion, and the details can be found in \cite{WW_HWG-2015,WY_stokes-2016}.
\end{proof}

For any $\sigma\in V_{h,k}$, we define two seminorms as follows
\begin{eqnarray}
\3bar\sigma\3bar_{w}^2&=&s(\sigma,\sigma), \label{stabilizer definition-1}\\
\3bar\sigma\3bar_{c}^2&=&c(\sigma,\sigma). \label{stabilizer definition-2}
\end{eqnarray}

Now, we are in a position to verify the solution existence and uniqueness of the S-PDWG scheme \eqref{PDWG-scheme1}-\eqref{PDWG-scheme2}. It is easy to check that the boundedness and coercivity of the bilinear forms ${\SS}(\cdot,\cdot)$ and $b(\cdot,\cdot)$ hold true.

\begin{lemma}(Inf-Sup Condition)\label{inf-sup condition}
Assume that the drift vector  $\pmb{\mu}\in [L^{\infty}(\O)]^{d}$ and the  diffusion tensor $a(x)$ is uniformly piecewise continuous with respect to the finite element partition ${\cal T}_{h}$. There exists a constant $\beta>0$ such that for any $v\in W_{h,s}$, there exists a weak function $\tilde{\sigma}\in V_{h, k}^{0}$ such that
\begin{eqnarray*}
b(\tilde{\sigma},v)&\geq&\frac{1}{2}\|v\|^{2},\label{inf-sup-1}\\
\3bar\tilde{\sigma}\3bar_{w}^2+\3bar\tilde{\sigma}\3bar_{c}^2&\leq&\beta\|v\|^2,\label{inf-sup-2}
\end{eqnarray*}
provided that the meshsize $h$ satisfies $h \leq h_{0}$ for a sufficiently small, but fixed $h_{0}>0$.
\end{lemma}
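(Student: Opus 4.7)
The plan is a standard duality argument adapted to the weak Galerkin setting: given $v \in W_{h,s}$, I will construct $\tilde\sigma$ by projecting the solution of the continuous dual problem driven by $v$ into the weak finite element space, then verify both inequalities using the commutative diagram of Lemma \ref{commutative properties} together with projection/trace estimates.

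First I would invoke the $H^2$-regularity assumption \eqref{regurality operator} with datum $\chi = v \in L^2(\Omega)$ to obtain a unique $\Phi \in H^2(\Omega) \cap H_0^1(\Omega)$ satisfying $\pmb{\L}\Phi = v$ together with $\|\Phi\|_2 \lesssim \|v\|$. Define the candidate $\tilde\sigma := Q_h \Phi \in V_{h,k}$; the boundary condition $\Phi = 0$ on $\pa\Omega$ ensures $\tilde\sigma \in V_{h,k}^0$. The core computation is then
$$
b(\tilde\sigma,v) \;=\; \sum_{T\in\mathcal{T}_h}\Bigl(v,\ \pmb{\mu}\cdot\nabla_w(Q_h\Phi) + \tfrac12\sum_{i,j=1}^d a_{ij}\,\pa^2_{ji,w}(Q_h\Phi)\Bigr)_T.
$$
Using Lemma \ref{commutative properties} I replace $\nabla_w(Q_h\Phi)$ by $\mathcal{Q}_h^{(k-1)}\nabla\Phi$ and each $\pa^2_{ji,w}(Q_h\Phi)$ by $\mathcal{Q}_h^{(s)}\pa^2_{ji}\Phi$. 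Writing $\pmb{\mu}\cdot\mathcal{Q}_h^{(k-1)}\nabla\Phi = \pmb{\mu}\cdot\nabla\Phi + \pmb{\mu}\cdot(\mathcal{Q}_h^{(k-1)}-I)\nabla\Phi$, and similarly for the Hessian piece, the leading contribution telescopes to $(v,\pmb{\L}\Phi) = \|v\|^2$, while the remainder takes the form
$$
R \;=\; \sum_{T}\Bigl(v,\ \pmb{\mu}\cdot(\mathcal{Q}_h^{(k-1)}-I)\nabla\Phi\Bigr)_T + \tfrac12\sum_{T}\sum_{i,j}\Bigl(v,\ a_{ij}(\mathcal{Q}_h^{(s)}-I)\pa^2_{ji}\Phi\Bigr)_T.
$$
Since $v \in P_s(T)$ on each element, inserting the local averages $\bar{\pmb{\mu}}_T$, $\bar a_{ij,T}$ of the piecewise smooth coefficients allows me to kill the pieces where $v$ meets $\mathcal{Q}_h^{(s)}$ (by orthogonality of the projection against polynomials of degree $\leq s$ when $k-1 \geq s$ and $s \geq s$), and the leftover terms are controlled by $\|\pmb{\mu}-\bar{\pmb{\mu}}_T\|_{\infty,T}$ and $\|a_{ij}-\bar a_{ij,T}\|_{\infty,T}$, each of which is $O(h)$ by piecewise smoothness. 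Standard projection estimates bound $\|(\mathcal{Q}_h^{(\cdot)}-I)\nabla\Phi\|_T$ and the corresponding Hessian term by $\|\Phi\|_{2,T}$, yielding $|R| \lesssim h\,\|v\|\,\|\Phi\|_2 \lesssim h\,\|v\|^2$. Choosing $h_0$ so that this constant multiple of $h$ is at most $1/2$ gives $b(\tilde\sigma,v) \geq \tfrac12\|v\|^2$.

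For the stability bound, I would estimate $\3bar\tilde\sigma\3bar_w^2$ term by term from the definition \eqref{stabilizer} of $s_T$. The jump terms $\rho_0-\rho_b$ and $\nabla\rho_0\cdot\bn_e - \rho_n$ for $\rho = Q_h\Phi$ reduce via the definitions of $Q_0, Q_b, Q_g$ to $(I-Q_0)\Phi$ on $\pa T$ and $(I-Q_g)(\nabla\Phi\cdot\bn_e)$ on $e$; trace inequalities and standard projection approximation then give $h_T^{-3}\|\cdot\|_{\pa T}^2 + h_T^{-1}\|\cdot\|_{\pa T}^2 \lesssim \|\Phi\|_{2,T}^2$. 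The third piece $\gamma_1\|\pmb{\L}Q_0\Phi\|_T^2$ is controlled by the $H^2$-stability of $Q_0$, i.e.\ $\|\pmb{\L}Q_0\Phi\|_T \lesssim \|Q_0\Phi\|_{2,T} \lesssim \|\Phi\|_{2,T}$ on each smooth subregion (using $\pmb{\mu}, a \in L^\infty$ piecewise). The $c$-terms are treated analogously: $\|\nabla Q_0\Phi\|_T + \sum_{ij}\|\pa^2_{ij}Q_0\Phi\|_T \lesssim \|\Phi\|_{2,T}$. Summing over $T$ and using $\|\Phi\|_2 \lesssim \|v\|$ produces $\3bar\tilde\sigma\3bar_w^2 + \3bar\tilde\sigma\3bar_c^2 \leq \beta\|v\|^2$.

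The main obstacle I anticipate is the first step of bounding $R$: because the diffusion tensor $a$ and drift $\pmb{\mu}$ are only piecewise smooth (and non-smooth globally), one cannot simply integrate by parts. The fix is to exploit orthogonality of $v \in P_s(T)$ against the projections via the freeze-coefficient trick described above, which requires a careful alignment of the polynomial degrees $s \in \{k-1,k-2\}$ with the projection orders $k-1$ and $s$ appearing in Lemma \ref{commutative properties}, and relies on the piecewise-smoothness assumption to supply the $O(h)$ coefficient oscillation that ultimately forces the smallness hypothesis $h \leq h_0$.
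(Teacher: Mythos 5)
Your proposal is correct and follows essentially the same route as the paper, which omits the details and defers to Lemma 5.3 of \cite{WW-fp-2018}: that proof is exactly the duality argument you describe (solve $\pmb{\L}\Phi=v$ via the $H^2$-regularity assumption, take $\tilde\sigma=Q_h\Phi$, use the commutative identities to extract $\|v\|^2$ plus a remainder killed by the freeze-coefficient/orthogonality trick, and bound the stabilizer terms by trace and projection estimates together with $\|\Phi\|_2\lesssim\|v\|$). The only cosmetic caveat is that under the lemma's stated hypothesis of uniform piecewise continuity the coefficient oscillation is $o(1)$ rather than $O(h)$, which still suffices to absorb the remainder for $h\leq h_0$.
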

\begin{proof}
The proof is similar to the proof of Lemma $5.3$ in \cite{WW-fp-2018}. The details are thus omitted here.
\end{proof}

\begin{theorem}\label{solution-exist}
Assume that the drift vector  $\pmb{\mu}\in [L^{\infty}(\O)]^{d}$ and the diffusion tensor $a(x)\in [L^{\infty}(\O)]^{d\times d}$ are uniformly piecewise continuous with respect to the finite element partition ${\cal T}_{h}$. 
The S-PDWG scheme \eqref{PDWG-scheme1}-\eqref{PDWG-scheme2} has one and only one solution if the meshsize satisfies $h \leq h_0$  for a sufficiently small, but fixed $h_{0}>0$.
\end{theorem}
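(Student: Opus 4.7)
The plan is to invoke the Babu\v{s}ka--Brezzi framework; since \eqref{PDWG-scheme1}--\eqref{PDWG-scheme2} constitutes a finite-dimensional square linear system, it suffices to show that the homogeneous problem (with $f\equiv 0$) admits only the trivial solution $(u_h,\rho_h)=(0,0)$. Accordingly, assume $(u_h,\rho_h)\in W_{h,s}\times V_{h,k}^0$ satisfies the homogeneous form of \eqref{PDWG-scheme1}--\eqref{PDWG-scheme2}.

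First I would take $\sigma=\rho_h$ in the homogeneous \eqref{PDWG-scheme1}, which gives ${\SS}(\rho_h,\rho_h)+b(\rho_h,u_h)=0$; the constraint \eqref{PDWG-scheme2} with $v=u_h\in W_{h,s}$ forces $b(\rho_h,u_h)=0$, so ${\SS}(\rho_h,\rho_h)=0$. By construction ${\SS}$ is symmetric and positive semidefinite (every summand of $s_T$ in \eqref{stabilizer} and $c_T$ in \eqref{additionc} is a square under $\gamma_1,\gamma_2,\gamma_3\geq 0$), so the Cauchy--Schwarz inequality for semi-inner products gives ${\SS}(\rho_h,\sigma)=0$ for every $\sigma\in V_{h,k}^0$, and hence \eqref{PDWG-scheme1} collapses to $b(\sigma,u_h)=0$ for all such $\sigma$. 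I would then invoke the Inf-Sup Condition (Lemma \ref{inf-sup condition}) with the datum $v=u_h\in W_{h,s}$, which under the meshsize restriction $h\leq h_0$ produces $\tilde\sigma\in V_{h,k}^0$ satisfying $b(\tilde\sigma,u_h)\geq \tfrac12\|u_h\|^2$. Combining with $b(\tilde\sigma,u_h)=0$ forces $u_h=0$.

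The main obstacle is establishing $\rho_h=0$. Unpacking ${\SS}(\rho_h,\rho_h)=0$ yields, on every $T\in\mathcal{T}_h$: $\rho_0=\rho_b$ and $\nabla\rho_0\cdot\mathbf{n}_e=\rho_n$ on $\pa T$, together with $\pmb{\L}\rho_0=0$ in $T$ if $\gamma_1>0$, $\nabla\rho_0=0$ in $T$ if $\gamma_2>0$, and $\pa^2_{ij}\rho_0=0$ in $T$ for all $i,j$ if $\gamma_3>0$. Because $\rho_b$ is single-valued on interior edges/faces and $\rho_b=0$ on $\pa\Omega$, the first two constraints promote $\rho_0$ to a function in $H_0^1(\Omega)$ whose normal derivatives match across all interior edges/faces. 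In the regime $|\gamma_2|+|\gamma_3|>0$ the stabilizer $c$ is reversible: $\gamma_2>0$ pins $\rho_0$ to be piecewise constant while $\gamma_3>0$ pins it to be piecewise linear, so in either sub-case the just-established global continuity together with the vanishing boundary trace gives $\rho_0\equiv 0$, whence $\rho_b=\rho_n=0$.

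The delicate sub-case is $\gamma_1>0$, $\gamma_2=\gamma_3=0$, where only the elementwise identity $\pmb{\L}\rho_0=0$ is available. Here I would mirror the strategy of Wang--Wang \cite{WW-fp-2018}: the weak continuity of $\rho_0$ and the matching normal fluxes allow $\rho_0$ to serve as an admissible test function for the adjoint problem \eqref{dual formulation}, and the $H^2$-regularity hypothesis \eqref{regurality operator} on $\pmb{\L}$ then forces $\rho_0\equiv 0$; the remaining components $\rho_b=\rho_n=0$ follow immediately from the vanishing-stabilizer conditions, completing the uniqueness argument and hence the theorem.
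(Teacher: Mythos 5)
Your overall strategy coincides with the paper's: reduce to the homogeneous square system, extract ${\SS}(\rho_h,\rho_h)=0$ by testing with $\sigma=\rho_h$ and $v=u_h$, treat the sign cases of $\gamma_1,\gamma_2,\gamma_3$ separately to force $\rho_h=0$, and invoke the inf-sup condition of Lemma \ref{inf-sup condition} to force $u_h=0$. Your one structural variation --- using symmetry and positive semidefiniteness of ${\SS}$ together with the Cauchy--Schwarz inequality for semi-inner products to conclude ${\SS}(\rho_h,\sigma)=0$ for all $\sigma$, so that $b(\sigma,u_h)=0$ and hence $u_h=0$ can be obtained before $\rho_h=0$ is fully established --- is correct and mildly decouples the two unknowns; the paper instead finishes $\rho_h=0$ first and then substitutes back into \eqref{PDWG-scheme1}. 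Your sketch of the case $\gamma_1>0$, $\gamma_2=\gamma_3=0$ via the dual problem and the $H^2$-regularity assumption \eqref{regurality operator} matches what the paper does by citation to Theorem 5.4 of \cite{WW-fp-2018} (with the small caveat that the glued $\rho_0$ is the candidate \emph{solution} of \eqref{dual formulation}, not a test function for it).

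There is, however, a genuine gap in your treatment of the sub-case $\gamma_3>0$, $\gamma_2=0$. There the stabilizer only yields $\pa_{ij}^2\rho_0=0$ on each $T$, i.e.\ $\rho_0$ is piecewise affine, and your stated conclusion --- that global continuity plus the vanishing boundary trace already give $\rho_0\equiv 0$ --- is false: any conforming piecewise-linear hat function is continuous, vanishes on $\pa\O$, and is not identically zero. The missing step is precisely the normal-derivative matching $\nabla\rho_0\cdot\bn_e=\rho_n$ that you derived but then did not use: combined with $\rho_0=\rho_b$ on each $\pa T$ it upgrades $\rho_0$ to a $C^1(\O)$ (hence globally $H^2$) function, so that $\Delta\rho_0=0$ holds in all of $\O$ rather than merely elementwise, and uniqueness for the Dirichlet problem with $\rho_0=0$ on $\pa\O$ then gives $\rho_0\equiv 0$, after which $\rho_b$ and $\rho_n$ vanish as well. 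This is exactly how the paper closes this case; as written, your argument for the regime $|\gamma_2|+|\gamma_3|>0$ only covers $\gamma_2>0$.
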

\begin{proof}
It suffices to prove the system \eqref{PDWG-scheme1}-\eqref{PDWG-scheme2} with homogeneous data $f=0$ has a unique trivial solution $u_h=0$ and $\rho_h=0$. To this end, letting $\sigma=\rho_{h}$ in \eqref{PDWG-scheme1} and $v=u_{h}$ in \eqref{PDWG-scheme2} gives ${\SS}(\rho_{h},\rho_{h})=0$. This further yields $\sum_{T\in {\cal T}_h}s_T(\rho_{h},\rho_{h})=0$ and $\sum_{T\in {\cal T}_h}c_T(\rho_{h},\rho_{h})=0$. As to the case that $\gamma_1\geq0$, $\gamma_2>0$ and $\gamma_3\geq 0$, it follows from \eqref{stabilizer}-\eqref{additionc} that
$\rho_{0}-\rho_{b}=0$ and $\rho_{n}-\nabla\rho_{0}\cdot\bn_{e}=0$ on each $\pa T$; and $\nabla \rho_0=0$ on each $T\in {\cal T}_h$, which gives $\rho_0\in C^0(\Omega)$ and further $\rho_0\equiv const$ in $\Omega$. Using $\rho_b=0$ on $\partial \Omega$ and  $\rho_{0}-\rho_{b}=0$ on each $\pa T$, we have $\rho_0\equiv 0$ and further $\rho_h\equiv 0$ in $\Omega$. For the case that $\gamma_1\geq0$, $\gamma_2\geq0$ and $\gamma_3> 0$, we have $\partial_{ij}^2\rho_0=0$ in each $T$ for $i, j =1,\cdots, d$. Since $\rho_{0}-\rho_{b}=0$ and $\rho_{n}-\nabla\rho_{0}\cdot\bn_{e}=0$ on each $\pa T$, we have $\rho_0\in C^1(\Omega)$. Therefore, $\Delta \rho_0=0$ in $\Omega$ with the boundary condition $\rho_0=0$ on $\partial \Omega$ due to $\rho_h\in V_{h, k}^0$. Thus, $\rho_0\equiv 0$ in $\Omega$. From $\rho_{0}=\rho_{b}$ on each $\partial T$, we have $\rho_b\equiv 0$ and further $\rho_h\equiv 0$ in $\Omega$. As to the cases of $\gamma_1>0$, $\gamma_2\geq 0$, $\gamma_3\geq0$, the proof to show $\rho_h\equiv 0$ in $\Omega$ can be found in Theorem 5.4 in \cite{WW-fp-2018}.

Substituting $\rho_h=0$ into \eqref{PDWG-scheme1} yields
\begin{equation*}\label{exist-uniqueness-1}
b(\sigma, u_{h})=0,\qquad\forall\sigma\in V_{h,k}^{0}.
\end{equation*}
From the inf-sup condition in Lemma \ref{inf-sup condition}, there exists a weak function $\tilde{\sigma}\in V_{h,k}^{0}$ such that
\begin{equation*}
b(\tilde{\sigma},u_{h})\geq\frac{1}{2}\|u_h\|^2,
\end{equation*}
which gives $u_h\equiv 0$ in $\Omega$. This completes the proof of the theorem.
\end{proof}

\section{Error Equations}\label{Section:error equation}
The goal of this section is to derive the error equations for the S-PDWG numerical scheme \eqref{PDWG-scheme1}-\eqref{PDWG-scheme2}. Note that the error equations are critical to establish the error estimates in Section \ref{Section:error estimate}.

Let $u$ and $(u_h;\rho_h)\in W_{h,s}\times V_{h, k}^0$ be the solutions of the model problem \eqref{model-problem} and the S-PDWG algorithm \eqref{PDWG-scheme1}-\eqref{PDWG-scheme2}, respectively. Note that $\rho_h$ approximates the trivial exact solution $\rho=0$.
We define the error functions $e_{h}$ and $\epsilon_{h}$ as follows
\begin{eqnarray}
e_{h} &=& u_{h}-{\S}^{(s)}u, \label{EE-1}\\
\epsilon_{h} &=& \rho_{h}-Q_{h}\rho=\rho_{h}.\label{EE-2}
\end{eqnarray}

\begin{lemma}\label{auxi-equation}
For any $\sigma\in V_{h,k}$ and $v\in W_{h, s}$, there holds
\begin{equation*}\label{est1:07:23}
b_T(\sigma,v)=(\pmb{\L}\sigma_{0},v)_{T}+R_{T}(\sigma,v),
\end{equation*}
where
\begin{equation}\label{EE-21:43}
\begin{split}
R_{T}(\sigma,v)=&\frac{1}{2}\sum_{i,j=1}^{d}\langle(\sigma_{0}-\sigma_{b})n_{j},\pa_{i}({\S}^{(s)}(a_{ij}v))\rangle_{\pa T}\\
&-\frac{1}{2}\sum_{i, j=1}^{d}\langle\pa_{j}\sigma_{0}-(\sigma_{n}  (\bn_e)_j+ {\dt}\sigma_{b}\boldsymbol{\tau}_j),{\S}^{(s)}(a_{ij}v)n_{i}\rangle_{\pa T}\\
&-\langle\sigma_{0}-\sigma_{b},{\S}^{(k-1)}(\pmb{\mu}v)\cdot\bn\rangle_{\pa T}.
\end{split}
\end{equation}
\end{lemma}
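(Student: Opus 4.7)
The plan is to compute $b_T(\sigma,v)=(v,\pmb{\L}_w(\sigma))_T$ directly from the definitions of the discrete weak gradient and discrete weak second-order partial derivatives, exploiting the fact that the outputs of these operators are polynomials of known degree, which lets us insert $L^2$ projections harmlessly.

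First I would split $b_T(\sigma,v)$ into the convective and diffusive parts:
\begin{equation*}
b_T(\sigma,v)=(\pmb{\mu}v,\nabla_{w}\sigma)_T+\tfrac{1}{2}\sum_{i,j=1}^d(a_{ij}v,\pa_{ji,w}^2\sigma)_T.
\end{equation*}
For the convective term, since $\nabla_w\sigma\in[P_{k-1}(T)]^d$, I can replace $\pmb{\mu}v$ by ${\S}^{(k-1)}(\pmb{\mu}v)$ without changing the value, then apply identity \eqref{discrete weak gradient1-2} to obtain
\begin{equation*}
(\pmb{\mu}v,\nabla_{w}\sigma)_T=(\nabla\sigma_0,{\S}^{(k-1)}(\pmb{\mu}v))_T-\langle\sigma_0-\sigma_b,{\S}^{(k-1)}(\pmb{\mu}v)\cdot\bn\rangle_{\pa T}.
\end{equation*}
Crucially, $\nabla\sigma_0\in[P_{k-1}(T)]^d$ as well, so the volume term collapses back to $(\pmb{\mu}\cdot\nabla\sigma_0,v)_T$, contributing the drift half of $\pmb{\L}\sigma_0$ plus the third boundary term in $R_T$.

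Next, for the diffusive part, I use that $\pa_{ji,w}^2\sigma\in P_s(T)$ to replace $a_{ij}v$ by ${\S}^{(s)}(a_{ij}v)$, and then apply the alternative form \eqref{discrete weak partia derivetive l1-2} of the discrete weak second partial derivative (with indices $i$ and $j$ swapped, and using that the third component of the weak function has the special decomposition $\pmb{\sigma_g}=\sigma_n\bn_e+\dt\sigma_b\pmb{\tau}$, whose $j$-th scalar component is $\sigma_n(\bn_e)_j+\dt\sigma_b\tau_j$). This yields
\begin{equation*}
\begin{split}
(a_{ij}v,\pa_{ji,w}^2\sigma)_T=&\;(\pa_{ji}^2\sigma_0,{\S}^{(s)}(a_{ij}v))_T+\langle(\sigma_0-\sigma_b)n_j,\pa_i{\S}^{(s)}(a_{ij}v)\rangle_{\pa T}\\
&-\langle\pa_j\sigma_0-(\sigma_n(\bn_e)_j+\dt\sigma_b\tau_j),{\S}^{(s)}(a_{ij}v)\,n_i\rangle_{\pa T}.
\end{split}
\end{equation*}
Since $\sigma_0\in P_k(T)$ gives $\pa_{ji}^2\sigma_0\in P_{k-2}(T)\subset P_s(T)$ (using $s\in\{k-1,k-2\}$), the leading volume term becomes $(a_{ij}\pa_{ji}^2\sigma_0,v)_T$, which after summation and the factor $\tfrac12$ furnishes the diffusion half of $(\pmb{\L}\sigma_0,v)_T$; the boundary terms match the first two lines of \eqref{EE-21:43}.

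The main obstacle is not conceptual but bookkeeping: one must carefully swap the index pair $(i,j)\leftrightarrow(j,i)$ between $\pmb{\L}$ (which contains $a_{ij}\pa_{ji}^2$) and the identity \eqref{discrete weak partia derivetive l1-2}, correctly identify the $j$-th Cartesian component of $\pmb{\sigma_g}$ under the simplified ansatz $\pmb{\sigma_g}=\sigma_n\bn_e+\dt\sigma_b\pmb{\tau}$, and verify at each step that the polynomial degree of the quantity being tested against $a_{ij}v$ or $\pmb{\mu}v$ is indeed at most $s$ or $k-1$ so that the $L^2$-projection steps are valid. Once the indices are aligned, summing the contributions and collecting the three boundary integrals produces exactly $R_T(\sigma,v)$ as stated.
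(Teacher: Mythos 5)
Your proposal is correct and follows essentially the same route as the paper: split $b_T$ into convective and diffusive parts, insert the $L^2$ projections ${\S}^{(k-1)}$ and ${\S}^{(s)}$ using the polynomial degrees of $\nabla_w\sigma$ and $\pa_{ji,w}^2\sigma$, apply the integration-by-parts identities \eqref{discrete weak gradient1-2} and \eqref{discrete weak partia derivetive l1-2} with $\sigma_{gj}=\sigma_n(\bn_e)_j+{\dt}\sigma_b\tau_j$, and remove the projections from the volume terms since $\nabla\sigma_0\in[P_{k-1}(T)]^d$ and $\pa_{ji}^2\sigma_0\in P_{k-2}(T)\subset P_s(T)$. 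The index bookkeeping and degree checks you flag are exactly the steps the paper carries out.
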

\begin{proof}
From \eqref{discrete weak gradient1-2} and \eqref{discrete weak partia derivetive l1-2}, one has
\begin{equation*}
\begin{split}
b_T(\sigma,v)=&(\pmb{\L}_{w}(\sigma),v)_{T}\\
=&(\pmb{\mu}\cdot \nabla_{w}\sigma,v)_{T}+\frac{1}{2}\sum_{i, j=1}^{d}(a_{ij}\pa_{ji,w}^{2}\sigma,v)_{T}\\
=&(\nabla_{w}\sigma,{\S}^{(k-1)}(\pmb{\mu} v))_{T}+\frac{1}{2}\sum_{i,j=1}^{d}(\pa_{ji, w}^{2}\sigma,{\S}^{(s)}(a_{ij}v))_{T}\\
=&(\nabla\sigma_0,{\S}^{(k-1)}(\pmb{\mu} v))_{T}+\frac{1}{2} \sum_{i,j=1}^{d}(\pa_{ji}^{2}\sigma_0,{\S}^{(s)}(a_{ij}v))_{T}\\
&-\langle\sigma_{0}-\sigma_{b},{\S}^{(k-1)}(\pmb{\mu}v)\cdot\bn\rangle_{\pa T}
+\frac{1}{2}\sum_{i, j=1}^{d}\langle(\sigma_{0}-\sigma_{b})n_{j},\pa_{i}({\S}^{(s)}(a_{ij}v))\rangle_{\pa T}\\
&-\frac{1}{2}\langle\pa_{j}\sigma_{0}-(\sigma_{n} (\bn_e)_j+ {\dt}\sigma_{b} {\boldsymbol{\tau}}_{j}),{\S}^{(s)}(a_{ij}v)n_{i}\rangle_{\pa T}\\
=&(\nabla\sigma_{0},\pmb{\mu}v)_{T}+ \frac{1}{2}\sum_{i,j=1}^{d}(\partial_{ji}^{2}\sigma_{0},a_{ij}v)_{T}+R_{T}(\sigma,v)\\
=&(\pmb{\L}\sigma_{0},v)_T+R_{T}(\sigma,v),
\end{split}
\end{equation*}
where  $R_{T}(\sigma,v)$ is given in \eqref{EE-21:43}.

This completes the proof of the Lemma.
\end{proof}
\begin{lemma}
Let $e_h$ and $\epsilon_h$ be the error functions defined by \eqref{EE-1}-\eqref{EE-2}, respectively. The following error equations hold true
\begin{eqnarray}
\SS(\epsilon_{h},\sigma) +b(\sigma,e_{h})&=&\zeta_{u}(\sigma),\quad\forall \sigma\in V_{h,k}^{0},\label{error-equation-1}\\
b(\epsilon_{h},v)&=&0,  \quad\quad~~~ \forall v\in W_{h,s},\label{error-equation-2}
\end{eqnarray}
where the term $\zeta_{u}(\sigma)$ is given by
\begin{equation}\label{error equation-3}
\begin{split}
&\zeta_{u}(\sigma)= \sum_{T\in {\cal T}_h}\langle\sigma_{b}-\sigma_{0},(\pmb{\mu}u-{\S}^{(k-1)}(\pmb{\mu}{\S}^{(s)}u))\cdot\bn\rangle_{\pa T}\\
&+\frac{1}{2}\sum_{i,j=1}^{d}\langle\sigma_{0}-\sigma_{b}, \pa_{i}(a_{ij}u-{\S}^{(s)}(a_{ij}{\S}^{(s)}u))n_{j} \rangle_{\pa T}\\
&-\frac{1}{2}\sum_{i,j=1}^{d}\langle\pa_{j}\sigma_{0}-(\sigma_{n} (\bn_e)_j+  {\dt} \sigma _{b} \boldsymbol{\tau}_{j}),\Big(a_{ij}u-{\S}^{(s)}(a_{ij}{\S}^{(s)}u)\Big)n_{i}\rangle_{\pa T}\\
&-({\L}\sigma_{0},{\S}^{(s)}u-u)_{T}.
\end{split}
\end{equation}
\end{lemma}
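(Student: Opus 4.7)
My plan is to derive the two error equations by (i) substituting the error decomposition $u_h = e_h + \mathcal{Q}^{(s)}u$, $\rho_h = \epsilon_h$ into the scheme (\ref{PDWG-scheme1})-(\ref{PDWG-scheme2}), (ii) using Lemma \ref{auxi-equation} to convert $b(\sigma,\mathcal{Q}^{(s)}u)$ into a volume term plus the correction $R_T$, and (iii) using elementwise integration by parts on the strong form of the PDE to express the volume term $\sum_T(\pmb{\L}\sigma_0,u)_T$ back in terms of $(f,\sigma_0)$ plus explicit boundary residuals that can be paired against the $R_T$ contributions.

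The second equation (\ref{error-equation-2}) is immediate: since $\rho\equiv 0$ is the unique (trivial) solution of the dual problem, $Q_h\rho=0$, so $\epsilon_h=\rho_h$ and (\ref{PDWG-scheme2}) gives $b(\epsilon_h,v)=b(\rho_h,v)=0$. For (\ref{error-equation-1}), substitution into (\ref{PDWG-scheme1}) yields
$\mathcal{S}(\epsilon_h,\sigma)+b(\sigma,e_h) = -(f,\sigma_0) - b(\sigma,\mathcal{Q}^{(s)}u)$,
and Lemma \ref{auxi-equation} together with the splitting $(\pmb{\L}\sigma_0,\mathcal{Q}^{(s)}u)_T = (\pmb{\L}\sigma_0,u)_T + (\pmb{\L}\sigma_0,\mathcal{Q}^{(s)}u-u)_T$ reduces matters to computing $\sum_T(\pmb{\L}\sigma_0,u)_T$.

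I would do this by testing (\ref{model-problem}) against $\sigma_0$ on each element and integrating by parts: once for $(\nabla\cdot(\pmb{\mu}u),\sigma_0)_T$ and twice for each $(\partial_{ij}^2(a_{ij}u),\sigma_0)_T$. This produces $-(\pmb{\L}\sigma_0,u)_T = -(f,\sigma_0)_T$ plus three families of boundary integrals: $\langle\sigma_0,\pmb{\mu}u\cdot\mathbf{n}\rangle_{\partial T}$, $\tfrac12\sum_{ij}\langle\partial_j\sigma_0,a_{ij}u\,n_i\rangle_{\partial T}$, and $-\tfrac12\sum_{ij}\langle\sigma_0,\partial_i(a_{ij}u)n_j\rangle_{\partial T}$. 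The key bookkeeping step is to insert the ``trace proxies'' $\sigma_b$ and $\sigma_n(\mathbf{n}_e)_j+D_{\boldsymbol{\tau}}\sigma_b\,\boldsymbol{\tau}_j$ into these expressions for free. This is allowed because $\sigma_b$ is single-valued on each interior face and vanishes on $\partial\Omega$ (and hence so does $D_{\boldsymbol{\tau}}\sigma_b$), while $\sigma_n\mathbf{n}_e$ lives on the edge with a sign-symmetric pairing against outward normals of the two adjacent elements; combined with the assumed piecewise smoothness of the data and the Dirichlet condition $u|_{\partial\Omega}=0$ (which makes $\pmb{\mu}u\cdot\mathbf{n}$, $\partial_i(a_{ij}u)n_j$, and $a_{ij}u$ ``globally consistent'' in the sense needed), the corresponding sums over $\partial T$ vanish.

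Once the boundary integrals are rewritten with $\sigma_0-\sigma_b$ and $\partial_j\sigma_0-\sigma_n(\mathbf{n}_e)_j-D_{\boldsymbol{\tau}}\sigma_b\,\boldsymbol{\tau}_j$, subtracting the explicit $R_T(\sigma,\mathcal{Q}^{(s)}u)$ from Lemma \ref{auxi-equation} pairs each ``exact data'' term with its projected counterpart $\mathcal{Q}^{(k-1)}(\pmb{\mu}\mathcal{Q}^{(s)}u)$ or $\mathcal{Q}^{(s)}(a_{ij}\mathcal{Q}^{(s)}u)$, producing exactly the first three lines of $\zeta_u(\sigma)$; the leftover volume piece $-(\pmb{\L}\sigma_0,\mathcal{Q}^{(s)}u-u)_T$ gives the fourth line. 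The step I expect to be most delicate is the ``insertion of trace proxies'' sign/cancellation argument: one has to be careful that $\sigma_b$, $\sigma_n\mathbf{n}_e$, and $D_{\boldsymbol{\tau}}\sigma_b\boldsymbol{\tau}$ — which by construction come from the tangential/normal decomposition built into $V_{h,k}$ — combine with the outward normal $\mathbf{n}$ to give cancellations across interior faces and vanish on $\partial\Omega$ uniformly in $i,j$. This is precisely the structural reason the S-PDWG space with $\pmb{v_g}=v_n\mathbf{n}_e + D_{\boldsymbol{\tau}}v_b\boldsymbol{\tau}$ still delivers the same error equation as the full PDWG formulation of \cite{WW-fp-2018}.
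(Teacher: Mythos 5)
Your proposal is correct and follows essentially the same route as the paper: equation \eqref{error-equation-2} from $\epsilon_h=\rho_h$ and \eqref{PDWG-scheme2}; then for \eqref{error-equation-1}, substitution into \eqref{PDWG-scheme1}, Lemma \ref{auxi-equation} with $v={\S}^{(s)}u$, the splitting $({\L}\sigma_0,{\S}^{(s)}u)_T=({\L}\sigma_0,u)_T+({\L}\sigma_0,{\S}^{(s)}u-u)_T$, elementwise integration by parts against the PDE, and free insertion of $\sigma_b$ and $\sigma_n(\bn_e)_j+{\dt}\sigma_b\boldsymbol{\tau}_j$ via single-valuedness on interior faces and the conditions $\sigma_b=0$, $u=0$ on $\partial\Omega$. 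The pairing of the resulting boundary residuals with the $R_T$ terms to produce \eqref{error equation-3} is exactly the paper's bookkeeping, so no gap remains.
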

\begin{proof}
Using \eqref{EE-2} and \eqref{PDWG-scheme2} gives
$$b(\epsilon_{h},v)=b(\rho_{h},v)=0,\qquad\forall v\in W_{h,s},$$
which completes the proof of \eqref{error-equation-2}.

As to \eqref{error-equation-1}, it follows from \eqref{EE-1}-\eqref{EE-2} and \eqref{PDWG-scheme1} that
\begin{equation}\label{error equation-4}
\begin{split}
&\SS(\epsilon_{h},\sigma)+b(\sigma,e_{h})\\
 =&-(f,\sigma_{0})-b(\sigma,{\S}^{(s)}u)\\
 =&-(f,\sigma_{0})- \sum_{T\in {\cal T}_h}\{(\pmb{\L}\sigma_{0},{\S}^{(s)}u)_T+R_{T}(\sigma,{\S}^{(s)}u)\}\\
 =&-(f,\sigma_{0})- \sum_{T\in {\cal T}_h}\{(\pmb{\L}\sigma_{0},u)_T+(\pmb{\L}\sigma_{0},{\S}^{(s)}u-u)_T+R_{T}(\sigma,{\S}^{(s)}u)\},
\end{split}
\end{equation}
where we used Lemma \ref{auxi-equation} by letting $v={\S}^{(s)}u \in W_{h, s}$.

As to the term $\sum_{T\in {\cal T}_h}(\pmb{\L}\sigma_{0},u)_T$ on the last line of \eqref{error equation-4}, we have
\begin{equation}\label{Error-equation-6}
\begin{split}
  &  \sum_{T\in {\cal T}_h}(\pmb{\L}\sigma_{0},u)_{T}\\
=&\sum_{T\in {\cal T}_h}(\pmb{\mu}\cdot \nabla \sigma_{0},u)_T+\frac{1}{2}\sum_{i,j=1}^{d}(a_{ij}\pa_{ji}^{2}\sigma_{0},u)_{T}\\
=&\sum_{T\in {\cal T}_h}(-\sigma_{0},\nabla\cdot(\pmb{\mu}u))_{T}+\langle\sigma_{0},\pmb{\mu}u\cdot\bn\rangle_{\pa T}
+\frac{1}{2}\sum_{i,j=1}^{d}(\sigma_{0},\pa_{ij}^{2}(a_{ij}u))_{T}\\&-\frac{1}{2}\langle\sigma_{0}n_{j},\pa_{i}(a_{ij}u)\rangle_{\pa T} +\frac{1}{2}\langle\pa_{j}\sigma_{0},a_{ij}u  n_{i}\rangle_{\pa T}\\
=&-(\sigma_{0},f)+\sum_{T\in {\cal T}_h}\langle\sigma_{0},\pmb{\mu}u\cdot\bn\rangle_{\pa T}-\frac{1}{2}\sum_{i,j=1}^{d}\langle\sigma_{0},\pa_{i}(a_{ij}u)n_{j}\rangle_{\pa T}\\
& +\frac{1}{2}\langle\pa_{j}\sigma_{0},a_{ij}u  n_{i}\rangle_{\pa T}\\
=&-(\sigma_{0},f)+\sum_{T\in {\cal T}_h}\langle\sigma_{0}-\sigma_b,\pmb{\mu}u\cdot\bn\rangle_{\pa T}-\frac{1}{2}\sum_{i,j=1}^{d}\langle\sigma_{0}-\sigma_b,\pa_{i}(a_{ij}u)n_{j}\rangle_{\pa T}\\
& +\frac{1}{2}\langle\pa_{j}\sigma_{0}-(\sigma_{n} (\bn_e)_j+{\dt}\sigma_{b}\boldsymbol{\tau}_{j}),a_{ij}u  n_{i}\rangle_{\pa T},
\end{split}
\end{equation}
where we used \eqref{model-problem}, the usual integration by parts, and the following identities
$\sum_{T\in {\cal T}_h}\langle\sigma_{b},u\pmb{\mu}\cdot\bn\rangle_{\pa T}=0$, $\sum_{T\in {\cal T}_h}\sum_{i,j=1}^{d}\frac{1}{2}\langle\sigma_{b},\pa_{i}(a_{ij}u)  n_{j}\rangle_{\pa T}=0$, and
$\sum_{T\in {\cal T}_h}\sum_{i,j=1}^{d}\frac{1}{2}\langle \sigma_{n} (\bn_e)_j+{\dt}\sigma_{b}\boldsymbol{\tau}_{j},a_{ij}u  n_{i}\rangle_{\pa T}=0$, due to the facts that $\sigma_b=0$ and $u=0$ on $\partial \Omega$. Therefore, substituting \eqref{Error-equation-6} into \eqref{error equation-4} gives \eqref{error-equation-1}.

This completes the proof of the Lemma.
\end{proof}

\section{Technical Estimates}\label{Section:error estimate for projections}
We shall present the following technical results which are critical  to derive the error estimates for the numerical solution arising from the S-PDWG algorithm \eqref{PDWG-scheme1}-\eqref{PDWG-scheme2} in Section \ref{Section:error estimate}.
\begin{lemma}\cite{WY-ellip_MC2014, WW-fp-2018}\label{Section:error estimate for projections-1}
Let ${\cal T}_h$ be a finite element partition of $\O$ satisfying the shape regular assumptions as specified in \cite{WY-ellip_MC2014}. For  $0\leq t \leq \min\{2,k\}$,  there holds
\begin{eqnarray*}
\sum_{T\in {\cal T}_h} h_{T}^{2t}\|u-Q_{0}u\|_{t,T}^{2} &\lesssim& h^{2(m+1)}\|u\|_{m+1}^{2}, \quad m\in[t-1,k] ,\quad k\geq1, \\
\sum_{T\in {\cal T}_h} h_{T}^{2t}\|u-{\S}^{(k-1)}u\|_{t,T}^{2} &\lesssim& h^{2m}\|u\|_{m}^{2}, \quad m\in[t,k] ,\quad k\geq1, \\
\sum_{T\in {\cal T}_h} h_{T}^{2t}\|u-{\S}^{(k-2)}u\|_{t,T}^{2} &\lesssim& h^{2m}\|u\|_{m}^{2}, \quad m\in[t,k-1] ,\quad k\geq2.
\end{eqnarray*}
\end{lemma}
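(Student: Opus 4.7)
The plan is to reduce all three displayed inequalities to a single local approximation estimate and then sum over elements. Specifically, for any $L^2$-projection $\mathcal{P}_r$ onto $P_r(T)$ on a shape-regular polytopal element $T$, I would establish
$$
|u-\mathcal{P}_r u|_{t,T} \lesssim h_T^{m+1-t}\,|u|_{m+1,T},\qquad 0\le t\le m+1,\ m\le r.
$$
Once this is in hand, choosing $r=k$ with $m+1\in[t,k+1]$ gives the first inequality, $r=k-1$ with $m\in[t,k]$ gives the second, and $r=k-2$ with $m\in[t,k-1]$ gives the third; multiplying by $h_T^{2t}$, squaring, summing over $T\in{\cal T}_h$, and using $h_T\le h$ produces exactly the stated bounds.

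For the local estimate, I would first handle $t=0$. Polynomial invariance of $\mathcal{P}_r$ combined with $L^2$-stability gives, for any $q\in P_r(T)$,
$$
\|u-\mathcal{P}_r u\|_T=\|(I-\mathcal{P}_r)(u-q)\|_T\le 2\|u-q\|_T.
$$
Taking the infimum over $q\in P_r(T)$ and invoking the Bramble--Hilbert / Deny--Lions lemma on $T$ yields $\|u-\mathcal{P}_r u\|_T\lesssim h_T^{m+1}|u|_{m+1,T}$. For $t\ge 1$ I would introduce an auxiliary $H^t$-stable interpolant $\Pi_r$ (e.g. a Scott--Zhang-type quasi-interpolant adapted to polytopal elements), decompose
$$
u-\mathcal{P}_r u=(u-\Pi_r u)-\mathcal{P}_r(u-\Pi_r u),
$$
and estimate the first term by standard interpolation theory while controlling the second via the inverse inequality $|\mathcal{P}_r v|_{t,T}\lesssim h_T^{-t}\|\mathcal{P}_r v\|_T\lesssim h_T^{-t}\|v\|_T$, applied to $v=u-\Pi_r u$. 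The two contributions combine to give the desired $h_T^{m+1-t}|u|_{m+1,T}$ bound.

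The main obstacle is that the elements $T$ are general polygons/polyhedra rather than simplices, so neither the Bramble--Hilbert lemma nor the inverse inequality is immediately available. The remedy is to use the shape-regularity framework of \cite{WY-ellip_MC2014}, under which every $T\in{\cal T}_h$ admits a virtual subdivision into shape-regular simplices with diameters comparable to $h_T$. Scaling to a reference element through this subdivision transfers the Deny--Lions lemma and the scaled inverse inequality to the polytopal setting, and a standard chain of trace/inverse estimates pieces the local bounds together. Because the argument is identical for the three projections $Q_0,\mathcal{Q}_h^{(k-1)},\mathcal{Q}_h^{(k-2)}$ (only the target polynomial degree changes), no additional work is required once the generic local result is in place; this is exactly why the statement is cited to \cite{WY-ellip_MC2014,WW-fp-2018} rather than reproved here.
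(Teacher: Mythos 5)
The paper offers no proof of this lemma --- it is quoted directly from \cite{WY-ellip_MC2014, WW-fp-2018} --- so there is nothing to compare line by line; your reconstruction is the standard argument used in those references and it is correct. The reduction to a single local estimate via polynomial invariance of the $L^2$ projection, Bramble--Hilbert, and a scaled inverse inequality (justified on polytopal elements by the shape-regularity assumptions of \cite{WY-ellip_MC2014}) is exactly how these projection estimates are established, and your bookkeeping of the admissible ranges of $m$ and $t$ for the three projections $Q_0$, ${\S}^{(k-1)}$ and ${\S}^{(k-2)}$ checks out.
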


\begin{lemma}\cite{WW-fp-2018}\label{PROJECTION-ESTIMATE-2}
Let ${\cal T}_h$ be a finite element partition of $\O$ satisfying the shape regular assumptions as specified in \cite{WY-ellip_MC2014}.  Assume that the diffusion tensor $a(x)$ and the drift tensor $\pmb{\mu}$ are uniformly piecewise smooth up to order $m-1$ in $\O$ with respect to the finite element partition ${\cal T}_h$. For any $v\in \prod_{T\in {\cal T}_h}H^{m-1}(T)\cap H^{2}(T)$, there holds
\begin{eqnarray*}
\Big(\sum_{T\in {\cal T}_h} h_{T}^{3}\|\pmb{\mu}v-{\S}^{(k-1)}(\pmb{\mu}{\S}^{(s)}v)\|_{\pa T}^{2}\Big)^{\frac{1}{2}}&\lesssim& h^{m}\|v\|_{m-1},\\
\Big(\sum_{T\in {\cal T}_h}\sum_{i,j=1}^{d} h_{T}^{3}\|\pa_{i}(a_{ij}v-{\S}^{(s)}(a_{ij}{\S}^{(s)}v))\|_{\pa T}^{2}\Big)^{\frac{1}{2}}&\lesssim&
h^{m-1}(\|v\|_{m-1}+h\delta_{m,2}\|v\|_{2}),\\
\Big(\sum_{T\in {\cal T}_h}\sum_{i,j=1}^{d} h_{T}\|a_{ij}v-{\S}^{(s)}(a_{ij}{\S}^{(s)}v)\|_{\pa T}^{2}\Big)^{\frac{1}{2}}&\lesssim&h^{m-1}\|v\|_{m-1},
\end{eqnarray*}
where $m\in[2, k+1]$ if $s=k-1$ and $m\in [2, k]$ if $s=k-2$, and $\delta_{m,2}$ is the usual Kronecker's delta with value 1 when $m = 2$ and value 0 otherwise.
\end{lemma}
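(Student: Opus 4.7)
The plan is to prove each of the three estimates by the same two-step decomposition: write the quantity inside the norm as the sum of a projection error of a (product-of-two-smooth) function and an $L^2$ projection of a factored projection error. Concretely, for the first and third estimates I would split
\[
\pmb{\mu}v-\S^{(k-1)}(\pmb{\mu}\S^{(s)}v)=(I-\S^{(k-1)})(\pmb{\mu}v)+\S^{(k-1)}\bigl(\pmb{\mu}(v-\S^{(s)}v)\bigr),
\]
and analogously with $a_{ij}$ in place of $\pmb{\mu}$ and $\S^{(s)}$ in place of $\S^{(k-1)}$. The piecewise-smoothness hypothesis on $\pmb{\mu}$ and $a_{ij}$ guarantees that $\pmb{\mu}v$ and $a_{ij}v$ inherit the $H^{m-1}(T)$ regularity of $v$, so the first summand can be attacked by the usual projection error estimates from Lemma \ref{Section:error estimate for projections-1}, while the second summand is a polynomial whose $L^2$-norm can in turn be controlled through $\|v-\S^{(s)}v\|_T$.

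On the boundary I would combine the standard trace inequality
\[
\|\phi\|_{\pa T}^2\lesssim h_T^{-1}\|\phi\|_T^2+h_T\|\nabla\phi\|_T^2,
\]
applied to the first (Sobolev) summand, with the polynomial trace/inverse inequalities $\|p\|_{\pa T}^2\lesssim h_T^{-1}\|p\|_T^2$ and $\|\nabla p\|_T\lesssim h_T^{-1}\|p\|_T$ applied to the second (polynomial) summand, together with the $L^2$-stability $\|\S^{(r)}\phi\|_T\le\|\phi\|_T$. Plugging in the bounds of Lemma \ref{Section:error estimate for projections-1} with the regularity index $m-1\in[t,s+1]$, and checking the two admissible cases $s=k-1,k-2$ separately, the scaling by $h_T^3$ in the first estimate converts the two projection errors into contributions of order $h_T^{2m}\|v\|_{m-1,T}^2$ and $h_T^{2m}\|v\|_{m-1,T}^2$, and the scaling by $h_T$ in the third estimate yields $h_T^{2(m-1)}\|v\|_{m-1,T}^2$; summing over $T$ produces the claimed bounds.

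The second estimate is the delicate one because the differentiation $\pa_i$ is outside the difference. For the Sobolev piece $\pa_i(I-\S^{(s)})(a_{ij}v)$ the trace inequality forces a second-order derivative on $T$, and on that piece Lemma \ref{Section:error estimate for projections-1} gives $\|(I-\S^{(s)})(a_{ij}v)\|_{r,T}\lesssim h_T^{m-1-r}\|a_{ij}v\|_{m-1,T}$ only when $m-1\ge r$. For the polynomial piece $\pa_i \S^{(s)}(a_{ij}(v-\S^{(s)}v))$, I would apply the inverse inequality once to remove the gradient and once to pass from the boundary to the interior, obtaining $h_T^3\|\cdot\|_{\pa T}^2\lesssim \|v-\S^{(s)}v\|_T^2\lesssim h_T^{2(m-1)}\|v\|_{m-1,T}^2$. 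For $m\ge 3$ this gives the target $h^{2(m-1)}\|v\|_{m-1}^2$ cleanly.

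The main obstacle, and the reason for the correction term $h\delta_{m,2}\|v\|_2$ in the statement, is the endpoint $m=2$: with $v\in H^1(T)\cap H^2(T)$, the trace-inequality second-derivative term $h_T\|(I-\S^{(s)})(a_{ij}v)\|_{2,T}^2$ cannot be controlled by $\|v\|_{m-1,T}^2=\|v\|_{1,T}^2$. Instead, using the available $H^2$-regularity together with the $H^2$-stability $\|(I-\S^{(s)})(a_{ij}v)\|_{2,T}\lesssim \|a_{ij}v\|_{2,T}\lesssim \|v\|_{2,T}$, this contribution is retained as $h_T^3\cdot h_T\|v\|_{2,T}^2=h_T^4\|v\|_{2,T}^2$, which after summation and square-rooting is exactly $h^{m-1}\cdot h\,\|v\|_2=h^2\|v\|_2$ with $m=2$; combined with the generic $h^{m-1}\|v\|_{m-1}$ term this is the source of the $h\delta_{m,2}\|v\|_2$ factor. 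A final application of piecewise smoothness of $a_{ij}$ and $\pmb{\mu}$ to replace $\|a_{ij}v\|_{m-1,T}$ and $\|\pmb{\mu}v\|_{m-1,T}$ by $\|v\|_{m-1,T}$ closes the three estimates.
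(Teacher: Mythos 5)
Your proposed argument is correct and is essentially the standard proof: the paper itself offers no proof of this lemma, citing it from \cite{WW-fp-2018}, where the estimates are obtained by exactly the decomposition you use --- splitting $\pmb{\mu}v-{\S}^{(k-1)}(\pmb{\mu}{\S}^{(s)}v)$ and its analogues into a projection error of the product plus a projection of the factored error, then applying the trace inequality, the polynomial trace/inverse inequalities, the $L^2$-stability of the projections, and the approximation bounds of Lemma \ref{Section:error estimate for projections-1}. Your identification of the $m=2$ endpoint (where the $H^2$-seminorm produced by the trace inequality can no longer be absorbed into $\|v\|_{m-1}$) as the source of the $h\delta_{m,2}\|v\|_{2}$ correction is also the right mechanism.
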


 \begin{lemma}\label{PROJECTION-ESTIMATE-3}
For any $\sigma\in V_{h, k}$, there holds
\begin{equation*}
\Big(\sum_{T\in {\cal T}_h}\sum_{i, j=1}^{d}h_{T}^{-1}\|\pa_{j}\sigma_{0}-(\sigma_{n}  (\bn_e)_j+{\dt}\sigma_{b}\boldsymbol{\tau}_{j})\|_{\pa T}^{2}\Big)^{\frac{1}{2}}\lesssim\3bar\sigma\3bar_{w}.
\end{equation*}
\end{lemma}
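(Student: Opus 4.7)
The plan is to exploit the orthogonal decomposition of $\nabla \sigma_0$ into its normal and tangential parts on each $e\subset \partial T$ and recognize that the vector $\pmb{v_g}$ is designed precisely so that these two components line up with $\sigma_n$ and $\dt\sigma_b\boldsymbol{\tau}$ respectively. After that, the two resulting pieces are each controlled by one of the three terms in the stabilizer $s_T$.

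First I would rewrite, on each $e\subset \partial T$,
\begin{equation*}
\nabla\sigma_0 = (\nabla\sigma_0\cdot\bn_e)\bn_e + \bn_e\times(\nabla\sigma_0\times\bn_e),
\end{equation*}
and note that the tangential piece $\bn_e\times(\nabla\sigma_0\times\bn_e) = \nabla\sigma_0 - (\nabla\sigma_0\cdot\bn_e)\bn_e$ is exactly the surface gradient of the trace $\sigma_0|_e$. By the very definition of $\pmb{v_g} = v_n\bn_e + \dt v_b\boldsymbol{\tau}$ in the weak space $V_k(T)$, the tangential piece $\dt\sigma_b\boldsymbol{\tau}$ is the surface gradient of $\sigma_b$ on $e$. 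Consequently the $j$-th component of the difference splits cleanly:
\begin{equation*}
\partial_j\sigma_0 - \sigma_n(\bn_e)_j - \dt\sigma_b\,\boldsymbol{\tau}_j = (\nabla\sigma_0\cdot\bn_e - \sigma_n)(\bn_e)_j + \bigl(\nabla_\tau(\sigma_0-\sigma_b)\bigr)_j,
\end{equation*}
where $\nabla_\tau$ denotes the surface gradient on $e$.

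Next, using $|\bn_e|=1$, $|\boldsymbol{\tau}|=1$, and $\bn_e\cdot\boldsymbol{\tau}=0$ (and more generally the orthogonality of the normal and tangential parts), squaring and summing over $j=1,\ldots,d$ yields
\begin{equation*}
\sum_{j=1}^d\bigl(\partial_j\sigma_0 - \sigma_n(\bn_e)_j - \dt\sigma_b\boldsymbol{\tau}_j\bigr)^2 = (\nabla\sigma_0\cdot\bn_e-\sigma_n)^2 + |\nabla_\tau(\sigma_0-\sigma_b)|^2,
\end{equation*}
so that summing also over $i=1,\ldots,d$ (the summand does not depend on $i$) introduces just an extra factor of $d$. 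Thus it remains to control, for each $T$,
\begin{equation*}
h_T^{-1}\|\nabla\sigma_0\cdot\bn_e - \sigma_n\|_{\partial T}^2 \quad \text{and}\quad h_T^{-1}\|\nabla_\tau(\sigma_0-\sigma_b)\|_{\partial T}^2.
\end{equation*}
The first quantity appears verbatim in $s_T(\sigma,\sigma)$ as given by \eqref{stabilizer}, so it is bounded by $\3bar\sigma\3bar_w^2$.

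For the second term, I would use an inverse inequality on each edge/face, together with the fact that $\sigma_0|_e - \sigma_b|_e$ is a polynomial of degree at most $k$ on $e$ (since $\sigma_0|_T\in P_k(T)$ and $\sigma_b|_e\in P_k(e)$). This gives $\|\nabla_\tau(\sigma_0-\sigma_b)\|_{L^2(e)} \lesssim h_T^{-1}\|\sigma_0-\sigma_b\|_{L^2(e)}$, and therefore
\begin{equation*}
h_T^{-1}\|\nabla_\tau(\sigma_0-\sigma_b)\|_{\partial T}^2 \lesssim h_T^{-3}\|\sigma_0-\sigma_b\|_{\partial T}^2,
\end{equation*}
which is again a term in $s_T(\sigma,\sigma)$. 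Summing over $T\in {\cal T}_h$ and taking the square root concludes the proof. The only point that requires genuine care is the first step, namely interpreting $\dt\sigma_b\boldsymbol{\tau}$ as the full surface gradient of $\sigma_b$ on $e$ (especially in 3D, where the tangent space to $e$ is two-dimensional); once that identification is made explicit, the orthogonal-decomposition identity and the subsequent estimates are routine.
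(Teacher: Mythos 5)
Your proposal is correct and follows essentially the same route as the paper's own proof: decompose $\nabla\sigma_0$ on each $e\subset\partial T$ into normal and tangential components, absorb the normal mismatch $\nabla\sigma_0\cdot\bn_e-\sigma_n$ directly into the second stabilizer term, identify the tangential mismatch with the tangential derivative of $\sigma_0-\sigma_b$, and control it via an edge/face inverse inequality by the $h_T^{-3}\|\sigma_0-\sigma_b\|_{\partial T}^2$ term. The only cosmetic difference is that you use the orthogonality of the two components (a Pythagorean identity) where the paper simply applies the triangle inequality, which changes nothing in the final bound.
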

\begin{proof}
Note that $\nabla\sigma_{0}=(\nabla\sigma_{0}\cdot\bn_{e})\bn_{e}+(\nabla\sigma_{0}\cdot\pmb{\tau})\pmb{\tau}.$
From the triangle inequality, the inverse inequality and \eqref{stabilizer definition-1}, there holds
\begin{equation*}\label{PROJECTION-ESTIMATE-4}
\begin{split}
&\Big(\sum_{T\in {\cal T}_h}\sum_{i,j=1}^{d}h_{T}^{-1}\|\pa_{j}\sigma_{0}-(\sigma_{n}(\bn_e)_j+{\dt}\sigma_{b}\boldsymbol{\tau}_{j})\|_{\pa T}^{2}\Big)^{\frac{1}{2}}\\
=&\Big(\sum_{T\in {\cal T}_h}\sum_{i,j=1}^{d}h_{T}^{-1}\|(\nabla\sigma_{0}\cdot\bn_{e})(\bn_e)_j+(\nabla\sigma_{0}\cdot\pmb{\tau})\boldsymbol{\tau}_j-(\sigma_{n}(\bn_e)_j+{\dt}\sigma_{b}\boldsymbol{\tau}_{j})\|_{\pa T}^{2}\Big)^{\frac{1}{2}}\\
\leq&
\Big(\sum_{T\in {\cal T}_h}\sum_{i, j=1}^{d}h_{T}^{-1}\|(\nabla\sigma_{0}\cdot\bn_{e})(\bn_e)_j-\sigma_{n}(\bn_e)_j\|_{\pa T}^{2}\Big)^{\frac{1}{2}}\\
&+\Big(\sum_{T\in {\cal T}_h}\sum_{i,j=1}^{d}h_{T}^{-1}\|(\nabla\sigma_{0}\cdot\pmb{\tau})\boldsymbol{\tau}_j-{\dt}\sigma_{b}\boldsymbol{\tau}_{j}\|_{\pa T}^{2}\Big)^{\frac{1}{2}}\\
\lesssim&\3bar\sigma\3bar_{w}+\Big(\sum_{T\in {\cal T}_h}\sum_{i,j=1}^{d}h_{T}^{-1}\|\nabla\sigma_{0}\cdot\pmb{\tau}-{\dt}\sigma_{b}\|_{\pa T}^{2}\Big)^{\frac{1}{2}}\\
\lesssim&\3bar\sigma\3bar_{w}+\Big(\sum_{T\in {\cal T}_h}\sum_{i,j=1}^{d}h_{T}^{-1}h_{T}^{-2}\|\sigma_{0}-\sigma_{b}\|_{\pa T}^{2}\Big)^{\frac{1}{2}}\\
\lesssim&\3bar\sigma\3bar_{w}.
\end{split}
\end{equation*}

This completes the proof of the lemma.
\end{proof}

\section{Error Estimates}\label{Section:error estimate}
This section is devoted to establishing the error estimates for the S-PDWG approximation arising from the numerical scheme \eqref{PDWG-scheme1}-\eqref{PDWG-scheme2}.

\begin{theorem}\label{THM:energy-estimation}
Let $k\geq1$ and $\gamma_1>0$. Let $u$ and $(u_h;\rho_h)\in W_{h, s}\times V_{h, k}^0$ be the exact solution of the model problem \eqref{model-problem} and the numerical solution of the S-PDWG scheme \eqref{PDWG-scheme1}-\eqref{PDWG-scheme2}, respectively. Assume the diffusion tensor $a(x)$ and the drift tensor $\pmb{\mu}$ are uniformly piecewise smooth up to order $s+1$ in $\O$ with respect to the finite element partition ${\cal T}_h$. We further assume that the exact solution of the model problem \eqref{model-problem} satisfies $u\in\prod_{T\in {\cal T}_h}H^{s+1}(T)\cap H^2(T)$. The following error estimate holds true
\begin{equation}\label{Error-error-estimate-1}
\begin{split}
\3bar\epsilon_{h}\3bar_{w}+\|e_h\|\lesssim  h^{k}\|u\|_{k-1}+  h^{1+s}((1+\gamma_1^{-1/2})\|u\|_{s+1}+h\delta_{k,2}\|u\|_{2}),
\end{split}
\end{equation}
provided that $h<h_0$ for a sufficiently small, but fixed $h_{0}>0$.
\end{theorem}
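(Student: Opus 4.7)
The plan is to combine the error equations (\ref{error-equation-1})--(\ref{error-equation-2}), the stabilizer-based control of $\zeta_u(\sigma)$, and the inf-sup condition of Lemma \ref{inf-sup condition} in the standard Babu\v{s}ka--Brezzi style. First I would test (\ref{error-equation-1}) with $\sigma=\epsilon_h\in V_{h,k}^0$. Since $e_h=u_h-\mathcal{Q}^{(s)}u\in W_{h,s}$, the compatibility equation (\ref{error-equation-2}) yields $b(\epsilon_h,e_h)=0$, so the $b$-term drops out and I obtain $\SS(\epsilon_h,\epsilon_h)=\3bar\epsilon_h\3bar_w^2+\3bar\epsilon_h\3bar_c^2=\zeta_u(\epsilon_h)$. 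Hence everything reduces to a sharp upper bound of $|\zeta_u(\sigma)|$ in terms of $\3bar\sigma\3bar_w$ (plus the intrinsic approximation error).

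The second step is therefore to bound each of the four terms on the right-hand side of (\ref{error equation-3}) by Cauchy--Schwarz on $\partial T$, matching the scaling $h_T^{-3/2}$ or $h_T^{-1/2}$ from the stabilizer (\ref{stabilizer}) with the complementary scaling in Lemma \ref{PROJECTION-ESTIMATE-2}: the $(\sigma_0-\sigma_b)$ pairings give factors $h^{m}\|u\|_{m-1}$ and $h^{m-1}(\|u\|_{m-1}+h\delta_{m,2}\|u\|_2)$, while the $(\partial_j\sigma_0-\sigma_n(\bn_e)_j-\dt\sigma_b\boldsymbol{\tau}_j)$ pairing is absorbed using Lemma \ref{PROJECTION-ESTIMATE-3} into $\3bar\sigma\3bar_w$ and produces $h^{m-1}\|u\|_{m-1}$. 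The genuinely new ingredient — and the one place the hypothesis $\gamma_1>0$ is crucial — is the volumetric term $(\pmb{\L}\sigma_0,\mathcal{Q}^{(s)}u-u)_T$: by the third contribution in (\ref{stabilizer}) I have $\|\pmb{\L}\sigma_0\|_T\leq \gamma_1^{-1/2}\3bar\sigma\3bar_w$, and combined with Lemma \ref{Section:error estimate for projections-1} this contributes $\gamma_1^{-1/2}h^{s+1}\|u\|_{s+1}\3bar\sigma\3bar_w$. Choosing $m$ optimally in $[2,k+1]$ or $[2,k]$ according to whether $s=k-1$ or $s=k-2$, I obtain $|\zeta_u(\sigma)|\lesssim \3bar\sigma\3bar_w\cdot E(u)$ with $E(u):=h^{k}\|u\|_{k-1}+h^{1+s}((1+\gamma_1^{-1/2})\|u\|_{s+1}+h\delta_{k,2}\|u\|_2)$. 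Putting $\sigma=\epsilon_h$ and using Young's inequality on the resulting $\3bar\epsilon_h\3bar_w^2+\3bar\epsilon_h\3bar_c^2\lesssim \3bar\epsilon_h\3bar_w\,E(u)$ controls both seminorms by $E(u)$.

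For the $L^2$ estimate on $e_h$, I would invoke the inf-sup condition (Lemma \ref{inf-sup condition}) with $v=e_h\in W_{h,s}$, producing $\tilde\sigma\in V_{h,k}^0$ with $b(\tilde\sigma,e_h)\geq\tfrac12\|e_h\|^2$ and $\3bar\tilde\sigma\3bar_w^2+\3bar\tilde\sigma\3bar_c^2\leq\beta\|e_h\|^2$. Rearranging (\ref{error-equation-1}) with $\sigma=\tilde\sigma$ gives $\tfrac12\|e_h\|^2\leq|\zeta_u(\tilde\sigma)|+|\SS(\epsilon_h,\tilde\sigma)|$. The first piece is bounded by the estimate of Step~2 as $\3bar\tilde\sigma\3bar_w E(u)\lesssim \beta^{1/2}\|e_h\|E(u)$; the second is bounded via the Cauchy--Schwarz inequalities for $s(\cdot,\cdot)$ and $c(\cdot,\cdot)$ by $(\3bar\epsilon_h\3bar_w+\3bar\epsilon_h\3bar_c)(\3bar\tilde\sigma\3bar_w+\3bar\tilde\sigma\3bar_c)\lesssim E(u)\beta^{1/2}\|e_h\|$. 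Dividing by $\|e_h\|$ yields $\|e_h\|\lesssim E(u)$, which combined with the previous bound on $\3bar\epsilon_h\3bar_w$ delivers (\ref{Error-error-estimate-1}).

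The main obstacle I foresee is the careful matching between the three weight scales in the stabilizer ($h_T^{-3}$, $h_T^{-1}$, and the $\gamma_1$-weighted volumetric term) and the three projection bounds in Lemma \ref{PROJECTION-ESTIMATE-2}, together with keeping track of the two admissible values $s\in\{k-1,k-2\}$ which force different ranges of the index $m$ and explain the asymmetric appearance of $h^k\|u\|_{k-1}$ versus $h^{1+s}\|u\|_{s+1}$ in the final bound. The $\gamma_1^{-1/2}$ factor and the $\delta_{k,2}$ correction both originate entirely from this bookkeeping; the rest of the argument is essentially the template used for the Wang--Wang PDWG analysis \cite{WW-fp-2018}, with Lemma \ref{PROJECTION-ESTIMATE-3} providing the new piece required to convert the tangential-plus-normal decomposition of $\pmb{v_g}$ into a bound by $\3bar\sigma\3bar_w$.
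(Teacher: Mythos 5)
Your proposal is correct and follows essentially the same route as the paper's proof: testing the first error equation with $\sigma=\epsilon_h$ to reduce everything to bounding $|\zeta_u(\sigma)|$ via Cauchy--Schwarz against the stabilizer weights, absorbing the volumetric term through the $\gamma_1$-weighted piece of $s_T(\cdot,\cdot)$, and then recovering $\|e_h\|$ from the inf-sup condition applied to $v=e_h$ together with the Cauchy--Schwarz bounds on $s(\cdot,\cdot)$ and $c(\cdot,\cdot)$. The bookkeeping of the projection estimates, the origin of the $\gamma_1^{-1/2}$ factor and the $\delta_{k,2}$ correction all match the paper's argument.
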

\begin{proof} Note that $s=k-2$ or $s=k-1$. Letting $\sigma=\epsilon_{h}\in V_{h, k}^0$ in the error equation \eqref{error-equation-1} and from \eqref{error-equation-2}, we have
\begin{equation}\label{Error-error-estimate-2}
\SS(\epsilon_{h},\epsilon_{h}) =\zeta_{u}(\epsilon_{h}).
\end{equation}

For any $\sigma\in V_{h, k}^0$, using the Lemmas \ref{Section:error estimate for projections-1}-\ref{PROJECTION-ESTIMATE-3}, \eqref{error equation-3}, and the Cauchy-Schwarz inequality, there holds
\begin{equation}\label{Error-error-estimate-5-5}
\begin{split}
  & \qquad |\zeta_u(\sigma)|\\
 & \lesssim\Big(\sum_{T\in {\cal T}_h}h_{T}^{-3}\|\sigma_{b}-\sigma_{0}\|_{\pa T}^{2}\Big)^{\frac{1}{2}}\Big(\sum_{T\in {\cal T}_h}h_{T}^{3}\|\pmb{\mu}u-{\S}^{(k-1)}(\pmb{\mu}{\S}^{(s)}u)\|_{\pa T}^{2}\Big)^{\frac{1}{2}}\\
&+\Big(\sum_{T\in {\cal T}_h}h_{T}^{-3}\|\sigma_{b}-\sigma_{0}\|_{\pa T}^{2}\Big)^{\frac{1}{2}}\\
&\cdot\Big(\sum_{T\in {\cal T}_h}\sum_{i, j=1}^{d}h_{T}^{3}\| \pa_{i}(a_{ij}u-{\S}^{(s)}(a_{ij}{\S}^{(s)}u))\|_{\pa T}^{2}\Big)^{\frac{1}{2}}\\
&+\Big(\sum_{T\in {\cal T}_h}\sum_{j=1}^{d}h_{T}^{-1}\|\pa_{j}\sigma_{0}-(\sigma_{n} (\bn_e)_j+{\dt}\sigma_{b} \boldsymbol{ {\tau}}_{j})\|_{\pa T}^{2}\Big)^{\frac{1}{2}}\\
&\cdot \Big(\sum_{T\in {\cal T}_h}\sum_{i,j=1}^{d}h_{T}\|a_{ij}u-{\S}^{(s)}(a_{ij}{\S}^{(s)}u)\|_{\pa T}^{2}\Big)^{\frac{1}{2}}\\
&+\Big(\sum_{T\in {\cal T}_h}\gamma_1\|{\L}\sigma_{0}\|_{T}^{2}\Big)^{\frac{1}{2}}\Big(\sum_{T\in {\cal T}_h}\gamma_1^{-1}\|{\S}^{(s)}u-u\|_{T}^{2}\Big)^{\frac{1}{2}}\\
\lesssim& h^{k}\|u\|_{k-1}\3bar \sigma\3bar_{w}+h^{s+1}(\|u\|_{s+1}+h\delta_{k, 2}\|u\|_{2})\3bar \sigma\3bar_{w}+h^{s+1}\|u\|_{s+1}\3bar \sigma\3bar_{w}\\&+\gamma_1^{-1/2}h^{s+1}\|u\|_{s+1}\3bar\sigma\3bar_{w}\\
\lesssim&(h^{k}\|u\|_{k-1}+h^{s+1}(1+\gamma_1^{-1/2})\|u\|_{s+1}+h^{s+2}\delta_{k, 2}\|u\|_{2})\3bar\sigma\3bar_{w}.
\end{split}
\end{equation}
Substituting the above inequality into \eqref{Error-error-estimate-2} with $\sigma=\epsilon_{h}$ gives
\begin{equation*}
\begin{split}
\3bar\epsilon_{h}\3bar_{w}^2+\3bar\epsilon_{h}\3bar_{c}^2&=|\zeta_u(\epsilon_{h})|\\
&\lesssim(h^{k}\|u\|_{k-1}+h^{s+1}(1+\gamma_1^{-1/2})\|u\|_{s+1}+h^{s+2}\delta_{k, 2}\|u\|_{2})\3bar\epsilon_{h}\3bar_{w},
\end{split}
\end{equation*}
which implies
\begin{eqnarray}\label{epesti}
\3bar\epsilon_{h}\3bar_{w}&\lesssim&h^{k}\|u\|_{k-1}+h^{s+1}(1+\gamma_1^{-1/2})\|u\|_{s+1}+h^{s+2}\delta_{k, 2}\|u\|_{2}.
\label{Error-error-estimate-3}
\end{eqnarray}

Moreover, we have from the error equation \eqref{error-equation-1} that
\begin{equation}\label{Error-error-estimate-4}
b(\sigma,e_{h})=\zeta_{u}(\sigma)-\SS(\epsilon_{h},\sigma),\qquad\forall\sigma\in V_{h,k}^0.
\end{equation}

It follows from Lemma \ref{inf-sup condition} that there exists a weak function $\tilde{\sigma}\in V_{h,k}^0$ and $\beta>0$ satisfying
\begin{eqnarray} \label{Error-error-estimate-6}
&|b(\tilde{\sigma},e_{h})|\geq \frac{1}{2}\|e_{h}\|^{2},\\ &\3bar\tilde{\sigma}\3bar_{w}\leq\beta\|e_{h}\|.\label{Error-error-estimate-7}
\end{eqnarray}
Using the Cauchy-Schwarz inequality gives
\begin{eqnarray}
|s(\epsilon_{h},\sigma)|\leq\3bar\epsilon_{h}\3bar_{w}\3bar\sigma\3bar_{w}.\label{Error-error-estimate-8}
\end{eqnarray}
From \eqref{stabilizer}-\eqref{additionc}, \eqref{stabilizer definition-1}-\eqref{stabilizer definition-2} and the Cauchy-Schwarz inequality, we have
\begin{equation}\label{Error-error-estimate-21}
\begin{split}
|c(\epsilon_{h},\sigma)|&\leq\3bar\epsilon_{h}\3bar_{c}\3bar\sigma\3bar_{c}\\
&\lesssim\3bar\epsilon_{h}\3bar_{w}\3bar\sigma\3bar_{w}.
\end{split}
\end{equation}
Taking $\sigma=\tilde{\sigma}$ in \eqref{Error-error-estimate-4} and using \eqref{Error-error-estimate-5-5}-\eqref{epesti}, \eqref{Error-error-estimate-6}-\eqref{Error-error-estimate-21} together yield
\begin{equation*}\label{Error-error-estimate-9}
\begin{split}
\frac{1}{2}\|e_{h}\|^{2}&\leq|\zeta_{u}(\tilde{\sigma})|+|\SS(\epsilon_{h},\tilde{\sigma})|\\
&\lesssim \3bar\tilde{\sigma}\3bar_{w}(h^{k}\|u\|_{k-1}+h^{s+1}(1+\gamma_1^{-1/2})\|u\|_{s+1}+h^{s+2}\delta_{k, 2}\|u\|_{2})\\
&\lesssim\|e_h\|(h^{k}\|u\|_{k-1}+h^{s+1}(1+\gamma_1^{-1/2})\|u\|_{s+1}+h^{s+2}\delta_{k, 2}\|u\|_{2}),
\end{split}
\end{equation*}
which leads to
\begin{equation}\label{Error-error-estimate-10}
\begin{split}
\|e_{h}\|\lesssim  h^{k}\|u\|_{k-1}+h^{s+1}(1+\gamma_1^{-1/2})\|u\|_{s+1}+h^{s+2}\delta_{k, 2}\|u\|_{2}.
\end{split}
\end{equation}

Combining the estimates \eqref{Error-error-estimate-3} and \eqref{Error-error-estimate-10} completes the proof of the theorem.
\end{proof}

\begin{corollary}\label{Corollary:error-estimate-88}
Under the assumptions of Theorem \ref{THM:energy-estimation}, the following error estimate holds true
\begin{equation*}\label{Corollary:error-estimate-1}
\begin{split}
\|u_h-u\|\lesssim h^{k}\|u\|_{k-1}+h^{s+1}(1+\gamma_1^{-1/2})\|u\|_{s+1}+h^{s+2}\delta_{k, 2}\|u\|_{2}.
\end{split}
\end{equation*}
\end{corollary}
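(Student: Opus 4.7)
The corollary is an immediate consequence of Theorem \ref{THM:energy-estimation} together with a standard projection error estimate, so the plan is short. The key observation is that the error function $e_h$ introduced in \eqref{EE-1} measures $u_h$ against the $L^2$ projection ${\S}^{(s)}u$ rather than against $u$ itself; the triangle inequality
\[
\|u_h-u\|\le \|u_h-{\S}^{(s)}u\|+\|{\S}^{(s)}u-u\|=\|e_h\|+\|{\S}^{(s)}u-u\|
\]
therefore reduces the claim to bounding each of the two terms on the right.

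For the first term, I would simply invoke the estimate \eqref{Error-error-estimate-1} proved in Theorem \ref{THM:energy-estimation}, which already gives
\[
\|e_h\|\lesssim h^{k}\|u\|_{k-1}+h^{s+1}(1+\gamma_1^{-1/2})\|u\|_{s+1}+h^{s+2}\delta_{k,2}\|u\|_{2}.
\]

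For the projection term, I would apply Lemma \ref{Section:error estimate for projections-1} with $t=0$ and $m=s+1$. Recalling that $s$ is either $k-1$ or $k-2$, in the case $s=k-1$ we use the second estimate of that lemma with $m=k$, while in the case $s=k-2$ we use the third estimate with $m=k-1$; in both cases the result reads
\[
\|{\S}^{(s)}u-u\|\lesssim h^{s+1}\|u\|_{s+1},
\]
which is clearly dominated by the right-hand side of the target estimate. Adding the two bounds and using the triangle inequality finishes the proof. There is no real obstacle here; the entire analytical content has already been placed in Theorem \ref{THM:energy-estimation} and Lemma \ref{Section:error estimate for projections-1}, and the corollary is just a bookkeeping step.
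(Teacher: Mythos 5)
Your proposal is correct and follows exactly the paper's own argument: the paper's proof of this corollary is the one-line "Using the triangle inequality, Lemma \ref{Section:error estimate for projections-1}, and \eqref{Error-error-estimate-10} completes the proof," which is precisely the decomposition $\|u_h-u\|\le\|e_h\|+\|{\S}^{(s)}u-u\|$ you carry out, with the two cases $s=k-1$ and $s=k-2$ of the projection lemma correctly identified.
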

\begin{proof} Using the triangle inequality, Lemma \ref{Section:error estimate for projections-1}, and \eqref{Error-error-estimate-10} completes the proof of the corollary.
\end{proof}
\begin{theorem}\label{Corollary:error-estimate-3}
Let $s=k-1$ and $k\geq 1$.  Assume that the diffusion tensor $a(x)$ and the drift vector $\pmb{\mu}$ are piecewise constants with respect to the finite element partition ${\cal T}_h$ which is shape regular. Let $u$ and $(u_h;\rho_h)\in W_{h, s}\times V_{h, k}^0$ be the exact solution of the model problem \eqref{model-problem} and the numerical solution of the S-PDWG scheme \eqref{PDWG-scheme1}-\eqref{PDWG-scheme2} with the stabilization parameter $\gamma_1\geq0$, respectively. Assume that the exact solution of the model equation \eqref{model-problem}  is sufficiently regular satisfying $u\in\prod_{T\in {\cal T}_h}H^{k}(T)\cap H^2(T)$. The following error estimate holds true
\begin{equation*}\label{Corollary:error-estimate-8}
\begin{split}
\|u-u_h\|\lesssim h^{k}(\|u\|_{k}+ h\delta_{k,2}\|u\|_2).
\end{split}
\end{equation*}
\end{theorem}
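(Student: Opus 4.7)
The plan is to mirror the argument of Theorem \ref{THM:energy-estimation}, exploiting the piecewise constancy of $a$ and $\pmb{\mu}$ together with the choice $s=k-1$ in order to kill the single volumetric contribution in $\zeta_u(\sigma)$ that was responsible for the $\gamma_1^{-1/2}$ factor in the previous analysis. Once that term is removed, the stabilization parameter $\gamma_1$ is allowed to degenerate to zero without affecting the rate.

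The critical observation concerns the volumetric piece $-({\L}\sigma_0,{\S}^{(s)}u-u)_T$ from \eqref{error equation-3}. On each element $T\in{\cal T}_h$, because $\pmb{\mu}|_T$ and $a_{ij}|_T$ are constants and $\sigma_0|_T\in P_k(T)$, one has $\pmb{\L}\sigma_0|_T=\pmb{\mu}\cdot\nabla\sigma_0+\tfrac12\sum_{i,j=1}^{d}a_{ij}\pa_{ij}^2\sigma_0\in P_{k-1}(T)$ (the first summand sits in $P_{k-1}(T)$ and the second in $P_{k-2}(T)$). With $s=k-1$, the projection residual ${\S}^{(s)}u-u$ is $L^2$-orthogonal to $P_{k-1}(T)$ by definition of ${\S}^{(k-1)}$, so $({\L}\sigma_0,{\S}^{(s)}u-u)_T=0$ elementwise and the volumetric term disappears without any $\gamma_1$ control.

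With the volumetric term annihilated, I would set $\sigma=\epsilon_h\in V_{h,k}^0$ in \eqref{error-equation-1} and use \eqref{error-equation-2} to obtain $\SS(\epsilon_h,\epsilon_h)=\zeta_u(\epsilon_h)$. The three remaining boundary contributions of $\zeta_u(\sigma)$ are then controlled verbatim by the argument of Theorem \ref{THM:energy-estimation}: Cauchy--Schwarz, followed by Lemma \ref{Section:error estimate for projections-1}, Lemma \ref{PROJECTION-ESTIMATE-2} (applied with $s=k-1$), and Lemma \ref{PROJECTION-ESTIMATE-3}. The outcome is
$$|\zeta_u(\sigma)|\lesssim h^{k}\bigl(\|u\|_{k}+h\delta_{k,2}\|u\|_2\bigr)\,\3bar\sigma\3bar_w,\qquad\forall\sigma\in V_{h,k}^0,$$
and consequently $\3bar\epsilon_h\3bar_w\lesssim h^{k}(\|u\|_{k}+h\delta_{k,2}\|u\|_2)$.

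To upgrade this to the $L^2$ bound on $e_h$, I would invoke Lemma \ref{inf-sup condition} to produce $\tilde\sigma\in V_{h,k}^0$ with $b(\tilde\sigma,e_h)\geq\tfrac12\|e_h\|^2$ and $\3bar\tilde\sigma\3bar_w\leq\beta\|e_h\|$. Taking $\sigma=\tilde\sigma$ in $b(\sigma,e_h)=\zeta_u(\sigma)-\SS(\epsilon_h,\sigma)$ (a rearrangement of \eqref{error-equation-1}) and combining the $\zeta_u$ bound with the $\3bar\cdot\3bar_w$- and $\3bar\cdot\3bar_c$-continuity of $\SS$ yields $\|e_h\|\lesssim h^{k}(\|u\|_k+h\delta_{k,2}\|u\|_2)$. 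The triangle inequality $\|u-u_h\|\leq\|u-{\S}^{(k-1)}u\|+\|e_h\|$ together with Lemma \ref{Section:error estimate for projections-1} (giving $\|u-{\S}^{(k-1)}u\|\lesssim h^k\|u\|_k$) concludes the proof. The only delicate step is the polynomial-degree matching used to annihilate the volumetric term; it depends essentially on the hypothesis $s=k-1$, since if $s=k-2$ the first-order part $\pmb{\mu}\cdot\nabla\sigma_0\in P_{k-1}(T)$ is not orthogonal to ${\S}^{(k-2)}u-u$, and the argument necessarily reintroduces a $\gamma_1^{-1/2}$ factor.
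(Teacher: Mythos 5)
Your proposal is correct and follows essentially the same route as the paper: the paper's proof likewise observes that the volumetric term $({\L}\sigma_{0},{\S}^{(s)}u-u)_{T}$ vanishes when $s=k-1$ and the coefficients are piecewise constant, and then repeats the argument of Theorem \ref{THM:energy-estimation} and Corollary \ref{Corollary:error-estimate-88} with the $\gamma_1^{-1/2}$ contribution removed. Your explicit justification of the vanishing (that $\pmb{\L}\sigma_0|_T\in P_{k-1}(T)$ is $L^2$-orthogonal to ${\S}^{(k-1)}u-u$) is a detail the paper leaves implicit, but it is the intended mechanism.
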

\begin{proof}
Note that the last term $({\L}\sigma_{0},{\S}^{(s)}u-u)_{T}=0$ in the remainder \eqref{error equation-3} when $s=k-1$ and the diffusion tensor $a(x)$ and the drift vector $\pmb{\mu}$ are piecewise constants.
 Following the proof of Theorem \ref{THM:energy-estimation} and Corollary \ref{Corollary:error-estimate-88} completes the proof of the theorem without any difficulty.
\end{proof}

\section{Numerical Experiments}\label{Section:NE}
\subsection{Implementation of Tangential Component ${\dt}v_{b} \pmb{\tau}$ of $\bv_g$}\label{Section:Implementation  PD-WG}

We shall first discuss the implementation of the tangential component ${\dt}v_{b} \pmb{\tau}$ by taking $k=2$ on any polygonal element as an example. It can be easily generalized to $k\geq 1$ in two and three dimensions without any difficulty.

Let $T\in{\cal T}_{h}$ be a polygonal element. Denote by $|e|$ the length of the edge $e\subset \pa T$ with the start point $A_i(x_i, y_i)$ and the end point $A_j(x_j, y_j)$. The basis functions for the space $P_1(e)$ are the linear functions $\chi_i(i=1,2)$ given by
$$\chi_1=1~\text{on}~ A_i,~\chi_1=0~\text{on}~A_j;~~\text{and}~~\chi_2=1~\text{on}~ A_j,~\chi_2=0~\text{on}~A_i.$$
 The basis functions for the space $P_2(e)$ are the quadratic functions $\varphi_{bi} (i=1,2,3)$ given by
 \begin{equation*}
\begin{split}
\varphi_{b1}=\chi_1(2\chi_1-1),~~\varphi_{b2}= 4\chi_1\chi_2,~~\varphi_{b3}= \chi_2(2\chi_2-1).
\end{split}
\end{equation*}
For any function $v_b\in P_2(e)$, we have $v_b=\sum_{i=1}^3v_{bi}\varphi_{bi}$ with $v_{bi}(i=1,2,3)$ being the coefficients to be determined. Thus, we have

\begin{equation*}
\begin{split}
{\dt}v_{b} \pmb{\tau}= &{\dt}(v_{b1}\varphi_{b1}+v_{b2}\varphi_{b2}+v_{b3}\varphi_{b3})\pmb{\tau}\\
=&\left((4\chi_1-1){\dt}\chi_1v_{b1}+4(\chi_1{\dt}\chi_2+\chi_2{\dt}\chi_1)v_{b2}+(4\chi_2-1){\dt}\chi_2v_{b3}\right)\pmb{\tau}\\
=&\frac{-1}{|e|}\left((4\chi_1-1)v_{b1}+4(\chi_2-\chi_1)v_{b2}-(4\chi_2-1)v_{b3}\right)\pmb{\tau},
\end{split}
\end{equation*}
where we used  ${\dt}\chi_1=\frac{1}{|e|}(\chi_1(A_j)-\chi_1(A_i))=\frac{-1}{|e|}$, ${\dt}\chi_2=\frac{1}{|e|}(\chi_1(A_j)-\chi_1(A_i))=\frac{1}{|e|}$.

\subsection{Numerical Tests} We shall demonstrate some numerical examples to verify the theoretical results established in Section \ref{Section:error estimate}.

The convex domains are given by two square domains $\O_1=(0,1)^2$ and $\O_2=(-1,1)^2$. The non-convex domains are given by the L-shaped domain $\O_3=(0,1)^2\backslash (0.5,1)^2$ and the cracked domain $\O_4=\{|x|+|y|<1\}\backslash (0,1)*0$. The uniform triangular, rectangular and square partitions are employed in the numerical tests. The triangular partition starts from an initial triangulation of the domain and the meshes are successively refined by connecting the midpoints of the edges of each triangle. The rectangular partition is obtained from an initial $3 \times 2$ rectangular mesh which is  successively refined by connecting the midpoints of the parallel edges of each rectangle. The square partition is obtained from an initial $2\times 2$ square mesh and the next level is obtained by connecting the midpoints of the parallel edges of each square.

Let $k\geq1$. Recall that the finite element spaces for the primal variable $u_h$ and its dual variable $\rho_h$ are given as follows
\begin{equation*}
\begin{split}
W_{h, s}=&\{w:w|_{T}\in P_{s}(T), s=k-1~\text{or}~k-2,\; \forall T\in {\cal T}_{h}\},
\\
V_{h, k}=&\{\{v_{0},v_{b},\pmb{v_{g}}=v_{n}\bn_{e}+{\dt}v_{b}\pmb{\tau}\}:v_{0}\in P_{k}(T),v_{b}\in P_{k}(\pa T), v_{n}\in P_{k-1}(\pa T),  \forall T\in {\cal T}_{h}\}.
\end{split}
\end{equation*}
The corresponding element is called the ``Simplified $C^{-1}-P_{k}(T)/P_{k}(\pa T)/P_{k-1}(\pa T)/P_{s}(T)$ element''.  The error functions are measured in the following norms; i.e.,
$$
 \|\epsilon_0\|_0= \left(\sum_{T\in {\cal T}_h} \int_T |\epsilon_0|^2 dT\right)^{\frac{1}{2}}, \qquad \|\epsilon_b\|_0=\left(\sum_{T\in {\cal T}_h}h_T \int_{\partial T} |\epsilon_b|^2 ds\right)^{\frac{1}{2}},
$$
$$
\3bar\epsilon_n\3bar_1=\left(\sum_{T\in{\cal T}_{h}}h_T\int_{\pa T}|\epsilon_n|^2ds\right)^{1/2},  \qquad \| e_h\|_0= \left(\sum_{T\in{\cal T}_{h}}\int_T|e_h|^2dT\right)^{1/2}.
$$

First of all, we shall compare the degrees of freedom(dofs) of the S-PDWG method \eqref{PDWG-scheme1}-\eqref{PDWG-scheme2} with the PDWG scheme proposed in \cite{WW-fp-2018}, where the corresponding element is called ``General $C^{-1}$-type element''.
For a finite element partition ${\cal T}_{h}$, denote by $NT$ the number of elements and $NE$ the number of edges or faces. Note that the space $W_{h, s}$ is the same for both the S-PDWG scheme and the PDWG scheme  \cite{WW-fp-2018}. As to the space $V_{h, k}$,  it is obvious to see from Tables \ref{dofff-1}-\ref{dofff-2} that the  ``Simplified $C^{-1}-type$ element'' proposed in our paper has significantly fewer dofs than the ``General $C^{-1}-type$ element'' proposed in  \cite{WW-fp-2018} on any polygonal  and polyhedral meshes. 

\begin{table}[htbp!]\tiny\centering\scriptsize
\caption{Comparison of dofs for $k\geq1$ on any polygonal meshes.}\label{dofff-1}
{
\setlength{\extrarowheight}{1.5pt}
\begin{center}
\begin{tabular}{|l|l|}
\hline  &$V_{h, k}$   \\
\hline
 General $C^{-1}-type$      &$\frac{1}{2}(k+1)(k+2)NT+(3k+1)NE$   \\ \hline
Simplified $C^{-1}-type$             &$\frac{1}{2}(k+1)(k+2)NT+(2k+1)NE$     \\ \hline
\end{tabular}
\end{center}
}
\end{table}

\begin{table}[htbp!]\tiny\centering\scriptsize
\caption{Comparison of dofs for $k\geq1$ on any polyhedral meshes.}\label{dofff-2}
{
\setlength{\extrarowheight}{1.5pt}
\begin{center}
\begin{tabular}{|l|l|}
\hline  &$V_{h,k}$   \\
\hline
 General $C^{-1}-type$      &$\frac{1}{6}(k+1)(k+2)(k+3)NT+\frac{1}{2}(k+1)(3k+2)NE$    \\ \hline
Simplified $C^{-1}-type$      &$\frac{1}{6}(k+1)(k+2)(k+3)NT+(k+1)^2NE$   \\ \hline
\end{tabular}
\end{center}
}
\end{table}

\subsubsection{Numerical experiments with continuous diffusion tensor}
Table \ref{Example1:1:SquareTRI:test1-1} illustrates the performance of simplified $C^{-1}$-$P_2(T)/P_2(\pa T)/P_1(\pa T)/P_s(T)$ element on the uniform triangular partition of the unit square domain $\Omega_1=(0, 1)^2$ when $s=0$ and $s=1$ are employed, respectively. The exact solution is $u=\sin(x)\cos(y)$; the diffusion tensor $a=\{a_{ij}\}$ is $a_{11}=3,\  a_{12}=a_{21}=1,\ a_{22}=2$; the drift vector is $\pmb{\mu}=[1,1]'$; and the stabilization parameters are $\gamma_1=\gamma_2=\gamma_3=1$. We observe from Table  \ref{Example1:1:SquareTRI:test1-1} that the convergence rate for $e_h$ in the $L^2$ norm is of the expected optimal order ${\cal O}(h^{2})$ for $s=1$ and of an order higher than the expected optimal order ${\cal O}(h)$ for $s=0$, respectively.

\begin{table}[htbp]\centering\scriptsize
\caption{Convergence rates for simplified $C^{-1}$-$P_2(T)/P_2(\pa T)/P_1(\pa T)/P_s(T)$ element with exact solution $u=\sin(x)\cos(y)$ on $\O_1$; uniform triangular partition;
 the diffusion tensor $a_{11}=3$, $a_{12}=a_{21}=1$, and $a_{22}=2$; the drift vector $\pmb{\mu}=[1,1]'$; the parameters $\gamma_1=\gamma_2=\gamma_3=1$.}\label{Example1:1:SquareTRI:test1-1}
{
\setlength{\extrarowheight}{1.5pt}
\begin{center}
\begin{tabular}{|l|l|l|l|l|l|l|l|l|l|}
\hline  &$1/h$ &$\| \epsilon_0\|_0$& $Rate$ & $\| \epsilon_b\|_0$  &$Rate$ & $\3bar\epsilon_n\3bar_1$& $Rate$ & $\|e_h\|_0$& $Rate$ \\
\hline
   &$2$      &6.853e-02 & &7.679e-02   &  &6.718e-01     &   &3.815e-02  &\\
s=0&$4$      &1.446e-02 &2.25  &1.877e-02   &2.03   &1.642e-01     &2.03   &1.474e-02  &1.37\\
  &$8$       &3.626e-03 &2.00  &5.014e-03   &1.90   &4.075e-02     &2.01   &4.697e-03  &1.65\\
  &$16$      &9.230e-04 &1.97  &1.298e-03   &1.95   &1.015e-02     &2.01   &1.712e-03  &1.46 \\
  &$32$      &2.339e-04 &1.98  &3.303e-04   &1.97   &2.531e-03     &2.00   &7.256e-04  &1.24     \\
 \hline &$1/h$ &$\| \epsilon_0\|_0$& $Rate$ & $\| \epsilon_b\|_0$  &$Rate$ &$\3bar\epsilon_n\3bar_1$& $Rate$ & $\|e_h\|_0$& $Rate$ \\
\hline
&$2$     &8.536e-03 & &5.855e-03   &   &2.134e-02     &   &4.073e-02 &\\
s=1&$4$  &5.756e-04 &3.89  &4.479e-04   &3.71   &2.200e-03     &3.28   &9.947e-03 &2.03\\
&$8$     &3.718e-05 &3.95  &2.979e-05   &3.91   &2.427e-04     &3.18   &2.450e-03 &2.02\\
&$16$    &2.372e-06 &3.97  &1.915e-06   &3.96   &2.852e-05     &3.09   &6.083e-04 &2.01 \\
&$32$    &1.500e-07 &3.98  &1.213e-07   &3.98   &3.463e-06     &3.04   &1.517e-04 &2.00\\               \hline
\end{tabular}
\end{center}
}
\end{table}

Table \ref{Example2:0:SquareREC:test1-1} demonstrates the performance of simplified $C^{-1}$-$P_k(T)/P_k(\pa T)/P_{k-1}(\pa T)/P_0(T)$ element for the cases of $k=1$ and $k=2$ when the uniform rectangular partition is employed on the unit square domain $\Omega_1=(0,1)^2$. The exact solution is given by $u=\cos(\pi x)\cos(\pi y)$; the diffusion tensor $a=\{a_{ij}\}$ is $a_{11}=3,\  a_{12}=a_{21}=1,\ a_{22}=2$; and the drift vector is $\pmb{\mu}=[1,1]'$. The  parameters are $\gamma_2=\gamma_3=1$. It can been seen from Table \ref{Example2:0:SquareREC:test1-1} that the convergence rate for $e_h$ in the $L^2$ norm arrives at a superconvergence order which outperforms the optimal order ${\cal O}(h)$ for the cases of $(k, \gamma_1)=(1,0)$ and  $(k, \gamma_1)=(2,1)$, respectively.
\begin{table}[htbp]\centering\scriptsize
\caption{Convergence rates for simplified $C^{-1}$-$P_k(T)/P_k(\pa T)/P_{k-1}(\pa T)/P_0(T)$ element  with exact solution $u=\cos(\pi x)\cos(\pi y)$ on $\O_1$; uniform rectangular partition;
 the diffusion tensor $a_{11}=3$, $a_{12}=a_{21}=1$, and $a_{22}=2$; the drift vector $\pmb{\mu}=[1,1]'$; the parameters $\gamma_2=\gamma_3=1$.}\label{Example2:0:SquareREC:test1-1}
{
\setlength{\extrarowheight}{1.5pt}
\begin{center}
\begin{tabular}{|l|l|l|l|l|l|l|l|l|l|}
\hline &$1/h$ &$\| \epsilon_0\|_0$& $Rate$ & $\| \epsilon_b\|_0$  &$Rate$ &    $\3bar\epsilon_n\3bar_1$& $Rate$ & $\|e_h\|_0$& $Rate$ \\
\hline
&$4$             &1.555e-02 &  &3.066e-02   &   &1.680e-01     &   &8.518e-02  &\\
k=1&$8$          &4.631e-03 &1.75  &9.582e-03   &1.68   &4.951e-02     &1.76   &4.320e-02  &0.98\\
$\gamma_1=0$&$16$&1.293e-03 &1.84  &2.690e-03   &1.83   &1.356e-02     &1.87   &1.811e-02  &1.25\\
&$32$            &3.414e-04 &1.92  &7.107e-04   &1.92   &3.538e-03     &1.94   &6.756e-03  &1.42   \\
&$64$            &8.767e-05 &1.96  &1.825e-04   &1.96   &9.023e-04     &1.97   &2.447e-03  &1.47\\
\hline &$1/h$ &$\| \epsilon_0\|_0$& $Rate$ & $\| \epsilon_b\|_0$  &$Rate$ &    $\3bar\epsilon_n\3bar_1$& $Rate$ & $\|e_h\|_0$& $Rate$ \\
\hline
&$4$            &1.886e-02 &  &4.242e-02   &   &2.443e-01     &   &4.323e-02  &\\
k=2&$8$         &5.096e-03 &1.89  &1.263e-02   &1.75   &6.856e-02     &1.83   &1.033e-02  &2.07\\
$\gamma_1=1$&$16$&1.347e-03 &1.92 &3.411e-03   &1.89   &1.775e-02     &1.95   &3.258e-03  &1.67\\
&$32$           &3.474e-04 &1.96  &8.842e-04   &1.95   &4.501e-03     &1.98   &1.040e-03  &1.65   \\
&$64$           &8.826e-05 &1.98  &2.249e-04   &1.97   &1.133e-03     &1.99   &3.401e-04  &1.61\\               \hline
\end{tabular}
\end{center}
}
\end{table}

Table \ref{Example3:0:square-shaped-TRI:test2-2-1} shows the numerical results for the test problem \eqref{model-problem} when the exact solution is given by $u=\sin(x)\cos(y)$ on the uniform triangular partition of the  domain $\O_1=(0,1)^2$. The diffusion tensor $a=\{a_{ij}\}$ is $a_{11}=1+x^2$, $a_{12}=a_{21}=0.25xy$, and $a_{22}=1+y^2$; and the drift vector is $\pmb{\mu}=[x,y]'$. The parameters are given by $\gamma_1=\gamma_2=\gamma_3=1$. Our numerical results indicate the convergence rate for $e_h$ in the $L^2$ norm is of an expected optimal order ${\cal O}(h^2)$ for simplified $C^{-1}$-$P_2(T)/P_2(\pa T)/P_{1}(\pa T)/P_{1}(T)$ element and of an order higher than  the expected optimal order ${\cal O}(h)$ for simplified $C^{-1}$-$P_1(T)/P_1(\pa T)/P_{0}(\pa T)/P_{0}(T)$ element, respectively.

\begin{table}[htbp]\centering\scriptsize
\caption{Convergence rates for simplified $C^{-1}$-$P_k(T)/P_k(\pa T)/P_{k-1}(\pa T)/P_{k-1}(T)$ element with exact solution $u=\sin(x)\cos(y)$ on $\O_1$; uniform triangular partition; the diffusion tensor $a_{11}=1+x^2$, $a_{12}=a_{21}=0.25xy$, and $a_{22}=1+y^2$; the drift vector $\pmb{\mu}=[x,y]'$; the parameters $\gamma_1=\gamma_2=\gamma_3=1$.}\label{Example3:0:square-shaped-TRI:test2-2-1}
{
\setlength{\extrarowheight}{1.5pt}
\begin{center}
\begin{tabular}{|l|l|l|l|l|l|l|l|l|l|}
\hline &$1/h$ &$\| \epsilon_0\|_0$& $Rate$ & $\| \epsilon_b\|_0$  &$Rate$ &    $\3bar\epsilon_n\3bar_1$& $Rate$ & $\|e_h\|_0$& $Rate$ \\
\hline
&$2$           &2.940e-02 &  &4.473e-02   &   &1.606e-01     &   &3.070e-02 &\\
&$4$           &6.760e-03 &2.12  &1.006e-02   &2.15   &5.995e-02     &1.42   &1.724e-02 &0.83\\
$k=1$&$8$      &1.368e-03 &2.31  &2.077e-03   &2.28   &1.955e-02     &1.62   &7.552e-03 &1.19\\
&$16$          &2.864e-04 &2.26  &4.522e-04   &2.20   &5.522e-03     &1.82   &3.153e-03 &1.26  \\
&$32$          &6.645e-05 &2.11  &1.074e-04   &2.07   &1.452e-03     &1.93   &1.433e-03 &1.14\\
\hline &$1/h$ &$\| \epsilon_0\|_0$& $Rate$ & $\| \epsilon_b\|_0$  &$Rate$ &    $\3bar\epsilon_n\3bar_1$& $Rate$ & $\|e_h\|_0$& $Rate$ \\
\hline
&$2$           &6.523e-03 &  &7.606e-03   &  &3.726e-02     &  &8.630e-02 &\\
$k=2$&$4$      &5.269e-04 &3.63  &6.467e-04   &3.56   &4.850e-03     &2.94   &2.252e-02 &1.94\\
&$8$           &3.540e-05 &3.90  &4.381e-05   &3.88   &5.557e-04     &3.13   &5.603e-03 &2.01\\
&$16$          &2.272e-06 &3.96  &2.817e-06   &3.96   &6.527e-05     &3.09   &1.397e-03 &2.00  \\
&$32$          &1.438e-07 &3.98  &1.784e-07   &3.98   &7.956e-06     &3.04   &3.488e-04 &2.00\\               \hline
\end{tabular}
\end{center}
}
\end{table}

Tables \ref{Example4:0:L-shaped-tri:test2-2-1}-\ref{Example5:0:Crack-tri:test2-2-0} illustrate the numerical performance of simplified $C^{-1}$-$P_2(T)/P_2(\pa T)/P_{1}(\pa T)/P_s(T)$ element when the exact solution is given by $u=\sin(x)\cos(y)$ on the uniform triangular partition of the non-convex L-shaped domain $\O_3$ and cracked domain $\O_4$. The diffusion tensor is $a_{11}=1+x^2$, $a_{12}=a_{21}=0.25xy$, and $a_{22}=1+y^2$; and the drift vector is $\pmb{\mu}=[x,y]'$. The parameters are $\gamma_2=\gamma_3=1$.  It can be seen from Table \ref{Example4:0:L-shaped-tri:test2-2-1} that the convergence rate for $e_h$ in the $L^2$ norm seems to be of an order ${\cal O}(h)$ for $(s, \gamma_1)=(0, 0.1)$;  and of an order ${\cal O}(h^2)$ when $(s, \gamma_1)=(1, 10000)$ on the L-shaped domain $\Omega_3$. We observe from Table \ref{Example5:0:Crack-tri:test2-2-0} that the convergence rate for $e_h$ in the $L^2$ norm seems to be of an order ${\cal O}(h^2)$ for the simplified $C^{-1}$-$P_2(T)/P_2(\pa T)/P_{1}(\pa T)/P_1(T)$ element on the cracked domain $\Omega_4$ for the cases of $\gamma_1=0$ and $\gamma_1=1$. It should be pointed out that the convergence theory has not been developed on non-convex domains in this paper. However, we can still observe the convergence order from the numerical results when the non-convex domains are employed.

\begin{table}[htbp]\centering\scriptsize
\caption{Convergence rates for simplified $C^{-1}$-$P_2(T)/P_2(\pa T)/P_1(\pa T)/P_s(T)$ element with the exact solution $u=\sin(x)\cos(y)$ on the L-shaped domain $\O_3$; uniform triangular partition; the diffusion tensor $a_{11}=1+x^2$, $a_{12}=a_{21}=0.25xy$, and $a_{22}=1+y^2$; the drift vector $\pmb{\mu}=[x,y]'$; the parameters  $\gamma_2=\gamma_3=1$.}\label{Example4:0:L-shaped-tri:test2-2-1}
{
\setlength{\extrarowheight}{1.5pt}
\begin{center}
\begin{tabular}{|l|l|l|l|l|l|l|l|l|l|}
\hline &$1/h$ &$\| \epsilon_0\|_0$& $Rate$ & $\| \epsilon_b\|_0$  &$Rate$ &    $\3bar\epsilon_n\3bar_1$& $Rate$ & $\|e_h\|_0$& $Rate$ \\
\hline
&$2$           &1.108e-01       &      &1.374e-01   &       &9.854e-01     &       &1.438e-01 &\\
$s=0$&$4$      &2.147e-02       &2.37  &2.865e-02   &2.26   &2.499e-01     &1.98   &6.249e-02 &1.20\\
$\gamma_1=0.1$&$8$&4.614e-03    &2.22  &6.395e-03   &2.16   &6.337e-02     &1.98   &2.793e-02 &1.16\\
&$16$           &1.087e-03      &2.09  &1.529e-03   &2.06   &1.599e-02     &1.99   &1.315e-02 &1.09  \\
\hline &$1/h$ &$\| \epsilon_0\|_0$& $Rate$ & $\| \epsilon_b\|_0$  &$Rate$ &    $\3bar\epsilon_n\3bar_1$& $Rate$ & $\|e_h\|_0$& $Rate$ \\
\hline
&$2$               &1.667e-02 &    &1.776e-02   &  &3.495e-02     &  &1.880e-01 &\\
$s=1$&$4$          &1.258e-03 &3.73     &1.403e-03   &3.66   &4.163e-03     &3.07   &4.573e-02 &2.04\\
$\gamma_1=10000$&$8$ &8.516e-05 &3.88     &9.683e-05   &3.86   &4.741e-04     &3.13   &1.126e-02 &2.02\\
&$16$              &5.510e-06 &3.95      &6.305e-06    &3.94   &5.387e-05     &3.14  &  2.817e-03   &2.00 \\ \hline
\end{tabular}
\end{center}
}
\end{table}
\begin{table}[htbp]\centering\scriptsize
\caption{Convergence rates for simplified $C^{-1}$-$P_2(T)/P_2(\pa T)/P_1(\pa T)/P_1(T)$ element with the exact solution $u=\sin(x)\cos(y)$ on the crack domain $\O_4$; uniform triangular partition; the diffusion tensor $a_{11}=1+x^2$, $a_{12}=a_{21}=0.25xy$, and $a_{22}=1+y^2$; the drift vector $\pmb{\mu}=[x,y]'$; the parameters $\gamma_2=\gamma_3=1$.}\label{Example5:0:Crack-tri:test2-2-0}
{
\setlength{\extrarowheight}{1.5pt}
\begin{center}
\begin{tabular}{|l|l|l|l|l|l|l|l|l|l|}
\hline &$2/h$ &$\| \epsilon_0\|_0$& $Rate$ & $\| \epsilon_b\|_0$  &$Rate$ &    $\3bar\epsilon_n\3bar_1$& $Rate$ & $\|e_h\|_0$& $Rate$ \\
\hline
&$2$            &8.930e-03 & &7.834e-03   &   &5.152e-02     &  &9.903e-02 &\\
&$4$            &6.504e-04 &3.78  &6.676e-04   &3.55   &5.055e-03     &3.35   &2.537e-02 &1.96\\
$\gamma_1=0$&$8$&4.249e-05 &3.94  &4.579e-05   &3.87   &5.187e-04     &3.28   &6.285e-03 &2.01\\
&$16$           &2.704e-06 &3.97  &2.975e-06   &3.94   &5.865e-05     &3.14   &1.556e-03 &2.01  \\
 \hline &$2/h$ &$\| \epsilon_0\|_0$& $Rate$ & $\| \epsilon_b\|_0$  &$Rate$ &    $\3bar\epsilon_n\3bar_1$& $Rate$ & $\|e_h\|_0$& $Rate$ \\
\hline
&$2$            &8.930e-03 & &7.869e-03   &   &4.767e-02     &   &1.036e-01 &\\
$\gamma_1=1$&$4$&6.501e-04 &3.78  &6.717e-04   &3.55   &5.050e-03     &3.24   &2.590e-02 &2.00\\
&$8$            &4.255e-05 &3.93  &4.618e-05   &3.86   &5.213e-04     &3.28   &6.371e-03 &2.02\\
&$16$           &2.708e-06 &3.97  &3.000e-06   &3.94   &5.886e-05     &3.15   &1.573e-03 &2.02  \\ \hline
\end{tabular}
\end{center}
}
\end{table}

\subsubsection{Numerical experiments with discontinuous diffusion tensor}
Table \ref{Example28:square-tri:test3-2-0-1} illustrates some numerical results for simplified $C^{-1}$-$P_2(T)/P_2(\pa T)/P_1(\pa T)/P_1(T)$ element on the uniform triangular partition of the domain $\O_1$. The configuration of this test example is set as follows: $(1)$ the exact solution is given by $u=2\sin(2x)\cos(3y)$ when $y<1-x$ and $u=\sin(2x)\cos(3y)$ elsewhere; $(2)$ the diffusion tensor is piece-wisely defined in the sense that $a=I$ for $y<1-x$ and $a=2I$ otherwise, where $I$ is an identity matrix; (3) the drift vector is $\pmb{\mu}=[0,0]'$; and (4)  the  parameters are chosen as $\gamma_1=\gamma_2=\gamma_3=1$. We observe from Table \ref{Example28:square-tri:test3-2-0-1} that the convergence rate for $e_h$ in $L^2$ norm is of an order ${\cal O}(h^2)$, which supports the theory developed in Theorem \ref{Corollary:error-estimate-3}. 
\begin{table}[htbp]\centering\scriptsize
\caption{Convergence rates for simplified $C^{-1}$-$P_2(T)/P_2(\pa T)/P_1(\pa T)/P_1(T)$ element on $\O_1$; uniform triangular partition; $a=I$,  $u=2\sin(2x)\cos(3y)$ for $y<1-x$ and $a=2I$, $u=\sin(2x)\cos(3y)$ elsewhere; the drift vector $\pmb{\mu}=[0,0]'$; the parameter $\gamma_1=\gamma_2=\gamma_3=1$.} \label{Example28:square-tri:test3-2-0-1}
{
\setlength{\extrarowheight}{1.5pt}
\begin{center}
\begin{tabular}{|l|l|l|l|l|l|l|l|l|}
\hline $1/h$ &$\| \epsilon_0\|_0$& $Rate$ & $\| \epsilon_b\|_0$  &$Rate$ &    $\3bar\epsilon_n\3bar_1$& $Rate$ & $\|e_h\|_0$& $Rate$ \\
\hline
$2$     &3.958e-02 &   &2.697e-02   &    &1.263e-01     &   &1.028e-01  &\\
$4$     &2.909e-03 &3.77   &2.217e-03   &3.60    &1.404e-02     &3.17   &3.029e-02  &1.76\\
$8$     &1.931e-04 &3.91   &1.529e-04   &3.86    &1.573e-03     &3.16   &8.094e-03  &1.90\\
$16$    &1.240e-05 &3.96   &9.996e-06   &3.94    &1.890e-04     &3.06   &2.068e-03  &1.97 \\
$32$    &7.849e-07 &3.98   &6.374e-07   &3.97    &2.331e-05     &3.02   &5.195e-04  &1.99\\               \hline
\end{tabular}
\end{center}
}
\end{table}

Table \ref{Example7:0:square-REC:test3-2-0-1} shows the numerical results for simplified $C^{-1}$-$P_2(T)/P_2(\pa T)/P_{1}(\pa T)/P_0(T)$ element on the uniform rectangular partition of the domain $\O_2=(-1,1)^2$. The exact solution is given by $u=\alpha^{i}\sin(\pi x)\sin(\pi y)$, and the diffusion tensor $a=\{a_{ij}\}$ is given by $a_{11}=a_{11}^{i}$, $a_{12}=a_{21}=0$, $a_{22}=a_{22}^{i}$, where $a_{11}^{i}$, $a_{22}^{i}$ and $\alpha^{i} (i=1, \cdots, 4)$ are specified in Table \ref{Example7:coefficient parameter} with the superscript $i$ corresponding to the values in the $i-$th quadrant. The drift vector is $\pmb{\mu}=[0,0]'$.
The parameters in the S-PDWG scheme \eqref{PDWG-scheme1}-\eqref{PDWG-scheme2} are given by $\gamma_1=\gamma_2=\gamma_3=1$. The numerical results indicate that the convergence rate for $e_h$ in the $L^2$ norm arrives at an order of ${\cal O}(h^{1.8})$, which outperforms the theoretical prediction ${\cal O}(h)$.

\begin{table}[htbp]\centering\scriptsize
 {\caption{The diffusion tensor $a=\{a_{ij}\}$ and the exact solution $u=\alpha^{i}\sin(\pi x)\sin(\pi y)$.} \label{Example7:coefficient parameter}
{
\setlength{\extrarowheight}{1.5pt}
\begin{center}
\begin{tabular}{|l|l|}
\hline
$a_{11}^{2}=0.1$    &$a_{11}^{1}=1000$\\
$a_{22}^{2}=0.01$   &$a_{22}^{1}=100$\\
$\alpha^{2}=100$        &$\alpha^{1}=0.01$    \\ \hline
$a_{11}^{3}=100$    &$a_{11}^{4}=1$\\
$a_{22}^{3}=10$     &$a_{22}^{4}=0.1$\\
$\alpha^{3}=0.1$        &$\alpha^{4}=10$    \\  \hline
\end{tabular}
\end{center}
}
}\end{table}
\begin{table}[htbp]\centering\scriptsize
\caption{Convergence rates for simplified $C^{-1}$-$P_2(T)/P_2(\pa T)/P_{1}(\pa T)/P_0(T)$ element on $\O_2$; the exact solution $u=\alpha^{i}\sin(\pi x)\sin(\pi y)$;  the diffusion tensor $a_{11}=\alpha_{i}^{x}$, $a_{22}=\alpha_{i}^{y}$, $a_{12}=a_{21}=0$; the drift vector $\pmb{\mu}=[0,0]'$; uniform rectangular partition; the parameters $\gamma_1=\gamma_2=\gamma_3=1$.}\label{Example7:0:square-REC:test3-2-0-1}
{
\setlength{\extrarowheight}{1.5pt}
\begin{center}
\begin{tabular}{|l|l|l|l|l|l|l|l|l|}
\hline $2/h$ &$\| \epsilon_0\|_0$& $Rate$ & $\| \epsilon_b\|_0$  &$Rate$ &    $\3bar\epsilon_n\3bar_1$& $Rate$ & $\|e_h\|_0$& $Rate$ \\
\hline
$2$     &1.301e-00 &     &2.750e-00   &     &5.894e-00        &     &2.480e+01  & \\
$4$     &3.413e-01 &1.93     &8.332e-01   &1.72     &1.389e-00        &2.08     &6.464e-00  &1.94 \\
$8$     &8.406e-02 &2.02     &2.120e-01   &1.97     &3.139e-01        &2.15     &1.830e-00  &1.82\\
$16$    &2.088e-02 &2.01     &5.308e-02   &2.00     &7.476e-02        &2.07     &5.281e-01  &1.79\\
$32$    &5.209e-03 &2.00     &1.327e-02   &2.00     &1.831e-02        &2.03     &1.444e-01  &1.87 \\           \hline
\end{tabular}
\end{center}
}
\end{table}

Table \ref{Example6:0:square-shaped-TRI:test2-2-0} demonstrates the numerical performance on the uniform triangular partition of the domain $\O_2=(-1,1)^2$ for simplified $C^{-1}$-$P_2(T)/P_2(\pa T)/P_1(\pa T)/P_1(T)$ element. The exact solution is given by $u=\frac{1}{\alpha^i}\cos(x)\cos(y)$; and the diffusion tensor $a(x)=\{a_{ij}\}$ is $a_{11}=a_{11}^i$, $a_{12}=a_{21}=0$ and $a_{22}=a_{22}^i$. Here $a_{11}^{i}$, $a_{22}^{i}$ and $\alpha^{i}$ ($i=1, \cdots, 4$) are detailed in Table \ref{Example6:coefficient parameter} with the superscript $i$ corresponding to the values in the $i-$th quadrant. The drift vector $\pmb{\mu}=[0,0]'$.  The parameters are $\gamma_2=\gamma_3=1$. It can be seen from Table \ref{Example6:0:square-shaped-TRI:test2-2-0} that the convergence rate for $e_h$ in the $L^2$ norm arrives at an order of ${\cal O}(h^{2})$ for the case of $\gamma_1=1$, which is consistent with the theory. As to the case of $\gamma_1=0$,
Table \ref{Example6:0:square-shaped-TRI:test2-2-0} shows that the convergence rate for $e_h$ in the $L^2$ norm seems to arrive at an order of ${\cal O}(h^{2})$, for which no theory is available to compare with.

\begin{table}[htbp]\centering\scriptsize
{\color{black}{\caption{The diffusive tensor $a(x)=\{a_{ij}\}$ and the exact solution $u=\frac{1}{\alpha^i}\cos(x)\cos(y)$.} \label{Example6:coefficient parameter}
{
\setlength{\extrarowheight}{1.5pt}
\begin{center}
\begin{tabular}{|l|l|}
\hline
$a_{11}^{2}=2(2+x^{2})$    &$a_{11}^{1}=2+x^{2}$\\
$a_{22}^{2}=2(2-y^{2})$    &$a_{22}^{1}=2+y^{2}$\\
$\alpha^{2}=2$              &$\alpha^{1}=1$    \\ \hline
$a_{11}^{3}=3(2-x^2)$      &$a_{11}^{4}=4(2-x^2)$\\
$a_{22}^{3}=3(2-y^{2})$    &$a_{22}^{4}=4(2+y^{2})$\\
$\alpha^{3}=3$              &$\alpha^{4}=4$    \\  \hline
\end{tabular}
\end{center}
}}
}
\end{table}
\begin{table}[htbp]\centering\scriptsize
\caption{Convergence rates for simplified $C^{-1}$-$P_2(T)/P_2(\pa T)/P_1(\pa T)/P_1(T)$ element on $\O_2$; the exact solution $u=\frac{1}{\alpha^i}cos(x)cos(y)$ and the diffusion tensor $a(x)$ defined in Table \ref{Example6:coefficient parameter}; uniform triangular partition;  the drift vector $\pmb{\mu}=[0,0]'$; the parameters $\gamma_2=\gamma_3=1$.} \label{Example6:0:square-shaped-TRI:test2-2-0}
{
\setlength{\extrarowheight}{1.5pt}
\begin{center}
\begin{tabular}{|l|l|l|l|l|l|l|l|l|l|l|}
\hline &$2/h$ &$\| \epsilon_0\|_0$& $Rate$ & $\| \epsilon_b\|_0$  &$Rate$ &    $\3bar\epsilon_n\3bar_1$& $Rate$ & $\|e_h\|_0$& $Rate$ \\
\hline
&$2$           &3.001e-01 &  &1.905e-01   &    &1.052e-01        &     &4.998e-02  &  \\
$\gamma_1=0$&$4$&2.350e-02 &3.68 &1.840e-02   &3.37     &6.883e-02        &0.61     &1.342e-02  &1.90 \\
&$8$           &1.626e-03 &3.85  &1.376e-03   &3.74     &7.820e-03        &3.14     &3.514e-03  &1.93\\
&$16$          &1.044e-04 &3.96  &9.032e-05   &3.93     &8.103e-04        &3.27     &8.808e-04  &2.00 \\
&$32$          &6.595e-06 &3.98  &5.756e-06   &3.97     &9.151e-05        &3.15     &2.194e-04  &2.01 \\
\hline &$2/h$ &$\| \epsilon_0\|_0$& $Rate$ & $\| \epsilon_b\|_0$  &$Rate$ &    $\3bar\epsilon_n\3bar_1$& $Rate$ & $\|e_h\|_0$& $Rate$ \\
\hline
&$2$            &3.067e-01 & &2.224e-01   &    &1.461e-01        &    &7.144e-02  &  \\
$\gamma_1=1$&$4$&2.355e-02 &3.70 &1.957e-02   &3.51     &6.480e-02        &1.17     &1.880e-02  &1.93 \\
&$8$            &1.650e-03 &3.83 &1.460e-03   &3.74     &7.606e-03        &3.09     &4.782e-03  &1.98\\
&$16$           &1.068e-04 &3.95 &9.636e-05   &3.92     &8.249e-04        &3.20     &1.198e-03  &2.00 \\
&$32$           &6.758e-06 &3.98 &6.141e-06   &3.97     &9.420e-05        &3.13     &2.990e-04  &2.00 \\           \hline
\end{tabular}
\end{center}
}
\end{table}

Table \ref{Example9:square-rec:test3-3-2-0-1} presents some numerical results for simplified $C^{-1}$-$P_2(T)/P_2(\pa T)/P_1(\pa T)/P_0(T)$ element on the uniform rectangular partition of the domain $\O_1=(0,1)^2$ when the exact solution is given by $u=(x^2+y^2)^{0.8}$. The parameters are $\gamma_1=\gamma_2=\gamma_3=1$; the drift vector is $\pmb{\mu}=[0,0]'$; and the diffusion tensor $a=\{a_{ij}\}$ is given by $a_{11}=1+\frac{x^2}{x^2+y^2}$, $a_{12}=a_{21}=\frac{xy}{x^2+y^2}$, $a_{22}=1+\frac{y^2}{x^2+y^2}$.
Note that the diffusion tensor $a(x)$ fails to be continuous at the corner $(0, 0)$. The numerical results in Table \ref{Example9:square-rec:test3-3-2-0-1} indicate that the convergence rate for $e_h$ in the $L^2$ norm seems to be of an order higher than ${\cal O}(h)$, for which no theory has been developed  in this paper.

\begin{table}[htbp]\centering\scriptsize
\caption{Convergence rates for simplified $C^{-1}$-$P_2(T)/P_2(\pa T)/P_1(\pa T)/P_0(T)$ element on the domain $\O_1$; uniform rectangular partition; the diffusion tensor $a_{11}=1+\frac{x^2}{x^2+y^2}$, $a_{12}=a_{21}=\frac{xy}{x^2+y^2}$, $a_{22}=1+\frac{y^2}{x^2+y^2}$; the drift vector $\pmb{\mu}=[0,0]'$; the exact solution $u=(x^2+y^2)^{0.8}$; the parameters $\gamma_1=\gamma_2=\gamma_3=1$.}\label{Example9:square-rec:test3-3-2-0-1}
{
\setlength{\extrarowheight}{1.5pt}
\begin{center}
\begin{tabular}{|l|l|l|l|l|l|l|l|l|}
\hline $1/h$ &$\| \epsilon_0\|_0$& $Rate$ & $\| \epsilon_b\|_0$  &$Rate$ &    $\3bar\epsilon_n\3bar_1$& $Rate$ & $\|e_h\|_0$& $Rate$ \\
\hline
$4$     &9.962e-03  &  &2.453e-02   &   &1.490e-01     &   &1.976e-02 &\\
$8$     &2.157e-03  &2.21  &5.470e-03   &2.17   &3.789e-02     &1.98   &8.296e-03 &1.25\\
$16$    &5.178e-04  &2.06  &1.319e-03   &2.05   &9.493e-03     &2.00   &3.265e-03 &1.35\\
$32$    &1.286e-04  &2.01  &3.279e-04   &2.01   &2.375e-03     &2.00   &1.224e-03 &1.42  \\
$64$    &3.218e-05  &2.00  &8.205e-05   &2.00   &5.939e-04     &2.00   &4.455e-04 &1.46\\               \hline
\end{tabular}
\end{center}
}
\end{table}

 Table \ref{Example10:0:square-shaped-TRI:test2-2-0} demonstrates the numerical performance of S-PDWG algorithm \eqref{PDWG-scheme1}-\eqref{PDWG-scheme2} for the test problem \eqref{model-problem} when the exact solution is given by $u=\cos(x)\sin(y)$ on the uniform triangular partition of the unit square domain $\O_1=(0,1)^2$. The parameters are $\gamma_1=\gamma_2=\gamma_3=1$. The diffusion tensor $a=\{a_{ij}\}$ is given by $a_{11}=1+x$, $a_{12}=a_{21}=0.5x^{\frac{1}{3}}y^{\frac{1}{3}}$, $a_{22}=1+y$; and the drift vector is $\pmb{\mu}=[e^{1-x},e^{xy}]'$. Note that the first order derivative of the diffusion tensor $a_{12}=a_{21}$ fails to be continuous at the corner $(0, 0)$. Therefore, the theory developed in this paper is not available for this test case. However, we observe from  Table \ref{Example10:0:square-shaped-TRI:test2-2-0} that the convergence rate for $e_h$ in the $L^2$ norm seems to be of an order higher than ${\cal O}(h)$ for simplified $C^{-1}$-$P_k(T)/P_k(\pa T)/P_{k-1}(\pa T)/P_{k-1}(T)$ element with $k=1$ and $k=2$, respectively.

\begin{table}[htbp]\centering\scriptsize
\caption{Convergence rates for simplified $C^{-1}$-$P_k(T)/P_k(\pa T)/P_{k-1}(\pa T)/P_{k-1}(T)$ element on $\O_1$;  uniform triangular partition;  the exact solution $u=\cos(x)\sin(y)$; the diffusion tensor $a_{11}=1+x$,  $a_{12}=a_{21}=0.5x^{\frac{1}{3}}y^{\frac{1}{3}}$, $a_{22}=1+y$; the drift vector $\pmb{\mu}=[e^{1-x},e^{xy}]'$; the parameters $\gamma_1=\gamma_2=\gamma_3=1$.}\label{Example10:0:square-shaped-TRI:test2-2-0}
{
\setlength{\extrarowheight}{1.5pt}
\begin{center}
\begin{tabular}{|l|l|l|l|l|l|l|l|l|l|l|}
\hline &$1/h$ &$\| \epsilon_0\|_0$& $Rate$ & $\| \epsilon_b\|_0$  &$Rate$ &    $\3bar\epsilon_n\3bar_1$& $Rate$ & $\|e_h\|_0$& $Rate$ \\
\hline
&$2$            &2.867e-02 &  &4.281e-02   &     &1.502e-01        &    &4.348e-02  &  \\
$k=1$&$4$       &5.658e-03 &2.34  &8.227e-03   &2.38     &5.383e-02        &1.48     &1.869e-02  &1.22 \\
&$8$            &1.266e-03 &2.16  &1.889e-03   &2.12     &1.939e-02        &1.47     &9.576e-03  &0.96\\
&$16$           &2.635e-04 &2.26  &4.094e-04   &2.21     &5.904e-03        &1.72     &4.079e-03  &1.23 \\
&$32$           &5.685e-05 &2.21  &9.106e-05   &2.17     &1.603e-03        &1.88     &1.768e-03  &1.30 \\
\hline &$1/h$ &$\| \epsilon_0\|_0$& $Rate$ & $\| \epsilon_b\|_0$  &$Rate$ &    $\3bar\epsilon_n\3bar_1$& $Rate$ & $\|e_h\|_0$& $Rate$ \\
\hline
&$2$            &5.422e-03 & &6.239e-03   &     &3.212e-02        &     &8.392e-02  &  \\
$k=2$&$4$       &3.852e-04 &3.82 &5.474e-04   &3.51     &4.793e-03        &2.74     &2.067e-02  &2.02 \\
&$8$            &2.605e-05 &3.89 &4.175e-05   &3.71     &6.095e-04        &2.98     &5.115e-03  &2.01\\
&$16$           &1.822e-06 &3.84 &3.494e-06   &3.58     &9.340e-05        &2.71     &1.411e-03  &1.86 \\
&$32$           &1.570e-07 &3.54 &3.666e-07   &3.25     &1.822e-05        &2.36     &5.046e-04  &1.48 \\           \hline
\end{tabular}
\end{center}
}
\end{table}

\subsubsection{Numerical experiments for maximum principle}
 This numerical experiment is used to test the maximum principle for the S-PDWG method \eqref{PDWG-scheme1}-\eqref{PDWG-scheme2}. The configuration of this test problem is as follows: the domain is the unit square $\Omega=(0, 1)^2$; the exact solution is $u=-x(x-1)y(y-1)$; the diffusion tensor is $a(x)=I$ where $I$ is an identity matrix; and the drift vector is $\pmb{\mu}=[0,0]'$. This numerical experiment was also tested in \cite{maxWG_NMPDE2019}. We observe from Table \ref{Example4:0:square-REC:test4-2-0} that the convergence rate for $e_h$ in the $L^2$ norm is of an expected optimal order ${\cal O}(h^{2})$ for the simplified $C^{-1}$-$P_2(T)/P_2(\pa T)/P_{1}(\pa T)/P_1(T)$ element on the uniform triangular partitions; and the convergence rate for $e_h$ in the $L^2$ norm seems to be of a superconvergence order ${\cal O}(h^{2})$ for the simplified $C^{-1}$-$P_2(T)/P_2(\pa T)/P_{1}(\pa T)/P_0(T)$ element on uniform square partitions, compared with the expected optimal order ${\cal O}(h)$. 

\begin{table}[htbp]\centering\scriptsize
\caption{Convergence rates for simplified $C^{-1}-P_2(T)/P_2(\pa T)/P_{1}(\pa T)/P_s(T)$ element with exact solution $u=-x(x-1)y(y-1)$ on $\O_1$; uniform triangular partitions for $s=1$, uniform square partitions for $s=0$; the diffusion tensor $a=I$; the drift vector $\pmb{\mu}=[0,0]'$; the parameters $\gamma_1=\gamma_2=\gamma_3=0$.}\label{Example4:0:square-REC:test4-2-0}
{
\setlength{\extrarowheight}{1.5pt}
\begin{center}
\begin{tabular}{|l|l|l|l|l|l|l|l|l|l|}
\hline &$1/h$ &$\| \epsilon_0\|_0$& $Rate$ & $\| \epsilon_b\|_0$  &$Rate$ &    $\3bar\epsilon_n\3bar_1$& $Rate$ & $\|e_h\|_0$& $Rate$ \\
\hline
&$2$        &2.524e-03 & &1.940e-03   &   &8.254e-03     &   &3.969e-02 &\\
$s=1$&$4$      &1.784e-04 &3.82  &1.445e-04   &3.75   &1.014e-03     &3.03   &1.126e-02 &1.82\\
     &$8$      &1.181e-05 &3.92  &9.851e-06   &3.87   &1.243e-04     &3.03   &2.858e-03 &1.98\\
&$16$          &7.546e-07 &3.97  &6.377e-07   &3.95   &1.526e-05     &3.03   &7.153e-04 &2.00  \\
&$32$          &4.753e-08 &3.99  &4.034e-08   &3.98   &1.875e-06     &3.03   &1.789e-04 &2.00\\
 \hline &$1/h$ &$\| \epsilon_0\|_0$& $Rate$ & $\| \epsilon_b\|_0$  &$Rate$ &    $\3bar\epsilon_n\3bar_1$& $Rate$ & $\|e_h\|_0$& $Rate$ \\
\hline
 &$4$         &9.976e-04 & &2.311e-03   &  &1.083e-03     &   &5.545e-04 &\\
$s=0$&$8$       &2.414e-04 &2.05  &5.834e-04   &1.99   &1.403e-04     &2.95   &2.040e-04 &1.44\\
&$16$           &5.985e-05 &2.01  &1.461e-04   &2.00   &1.777e-05     &2.98   &5.765e-05 &1.82\\
&$32$           &1.493e-05 &2.00  &3.654e-05   &2.00   &2.215e-06     &3.00   &1.487e-05 &1.96  \\
&$64$           &3.731e-06 &2.00  &9.136e-06   &2.00   &2.754e-07     &3.01   &3.745e-06 &1.99\\               \hline
\end{tabular}
\end{center}
}
\end{table}

Tables \ref{Example4:0:square-REC:test4-2-0-8-2}-\ref{Example4:0:square-REC:test4-2-0-8-7} present the maximum and minimum values for the numerical approximation $u_h$ on the uniform triangular partition with the simplified $C^{-1}-P_2(T)/P_2(\pa T)/P_{1}(\pa T)/P_1(T)$ element and on the uniform square partition with the simplified $C^{-1}-P_2(T)/P_2(\pa T)/P_{1}(\pa T)/P_0(T)$ element. The sets of parameters $(\gamma_1,\gamma_2,\gamma_3)=(0,0,0)$ and $(\gamma_1,\gamma_2,\gamma_3)=(0,1,0)$ are chosen respectively. Note that $f=x(x-1)+y(y-1)\leq0$ in $\O_1$. 
Denoted by $\max_{\O_1} u_h|_v$, $\max_{\O_1} u_h|_c$ and $\max_{\O_1} u_h|_e$ the maximum values of $u_h$ at the vertexes, the centers and  the midpoints of the edges throughout all the elements $ \cup_{T\in {\cal T}_h} T \setminus \partial \Omega_1$, respectively.  
The same calculation applies to $\min_{\O_1} u_h|_v$, $\min_{\O_1} u_h|_c$, $\min_{\O_1} u_h|_e$, and $\max_{\pa\O_1} u_h|_v$, $\max_{\pa\O_1} u_h|_e$. It can be seen from Tables \ref{Example4:0:square-REC:test4-2-0-8-2}-\ref{Example4:0:square-REC:test4-2-0-8-7} that the values of $u_h$ are in the range of $(-0.0625,0)$ and $\max_{\O_1}u_h|_*<\max_{\pa\O_1}u_h|_*$, where $*$ could be $v, c, e$. This indicates that the numerical solution $u_h$ arising from S-PDWG scheme \eqref{PDWG-scheme1}-\eqref{PDWG-scheme2} satisfies the maximum principle.

\begin{table}[htbp]\centering\scriptsize
\caption{The maximum and minimum values for $u_h$ with the simplified $C^{-1}-P_2(T)/P_2(\pa T)/P_{1}(\pa T)/P_1(T)$ element on uniform triangular partition;  the parameters $\gamma_1=\gamma_3=0$.}\label{Example4:0:square-REC:test4-2-0-8-2}
{
\setlength{\extrarowheight}{1.5pt}
\begin{center}
\begin{tabular}{|l|l|l|l|l|l|l|l|}
\hline 
meshsize  &$h=\frac{1}{8}$&$h=\frac{1}{16}$ &$h=\frac{1}{32}$ &$h=\frac{1}{8}$&$h=\frac{1}{16}$&$h=\frac{1}{32}$
\\
\hline
parameter&$\gamma_2=0$&&& $\gamma_2=1$ &&
\\
\hline$\max_{\O_1}u_h|_v$&-3.342e-03&-8.876e-04&-2.288e-04& -3.336e-03&-8.872e-04&-2.287e-04\\
$\min_{\O_1} u_h|_v$&-6.418e-02&-6.293e-02&-6.261e-02& -6.418e-02&-6.293e-02&-6.261e-02\\
$\max_{\O_1} u_h|_c$&-5.379e-03 &-1.482e-03&-3.885e-04& -5.373e-03&-1.481e-03&-3.885e-04\\
$\min_{\O_1} u_h|_c$&-6.124e-02&-6.218e-02&-6.242e-02& -6.124e-02&-6.218e-02&-6.242e-02\\
$\max_{\O_1} u_h|_e$&-1.499e-04&-2.438e-05&-4.402e-06& -1.417e-04&-2.383e-05&-4.366e-06\\
$\min_{\O_1} u_h|_e$&-6.168e-02&-6.229e-02&-6.245e-02& -6.168e-02&-6.229e-02&-6.245e-02\\
$\max_{\pa\O_1} u_h|_v$&3.042e-03&8.389e-04&2.200e-04& 3.053e-03&8.395e-04&2.200e-04\\
$\max_{\pa\O_1}u_h|_e$&-1.499e-04&-1.420e-05&-4.142e-07& -1.417e-04&-1.466e-05&-4.468e-07\\
 \hline
\end{tabular}
\end{center}
}
\end{table}

\begin{table}[htbp]\centering\scriptsize
\caption{The maximum and minimum values for $u_h$ with the simplified  $C^{-1}-P_2(T)/P_2(\pa T)/P_{1}(\pa T)/P_0(T)$ element on uniform square partition;  the parameters $\gamma_1=\gamma_3=0$.}\label{Example4:0:square-REC:test4-2-0-8-7}
{
\setlength{\extrarowheight}{1.5pt}
\begin{center}
\begin{tabular}{|l|l|l|l|l|l|l|l|}
\hline meshsize &$h=\frac{1}{8}$&$h=\frac{1}{16}$ &$h=\frac{1}{32}$& $h=\frac{1}{8}$&$h=\frac{1}{16}$&$h=\frac{1}{32}$\\
\hline
parameter &$\gamma_2=0$&& &$\gamma_2=1$ &&\\
\hline
$\max_{\O_1} u_h|_v$&-7.206e-03&-1.992e-03&-5.232e-04 &-7.206e-03 &-1.993e-03&-5.233e-04\\
$\min_{\O_1} u_h|_v$&-6.213e-02&-6.241e-02&-6.248e-02 &-6.207e-02&-6.239e-02&-6.247e-02\\
$\max_{\pa\O_1} u_h|_v$&-9.030e-04&-2.346e-04&-5.981e-05 &-9.043e-04&-2.347e-04&-5.982e-05\\ \hline
\end{tabular}
\end{center}
}
\end{table}

In conclusion, the numerical performance is consistent with or better than what the theory predicts. In particular, most of the numerical results indicate a superconvergence order of error estimate especially for $s=0$ on the rectangular meshes. The last numerical test demonstrates the maximum principle holds true for the S-PDWG scheme  \eqref{PDWG-scheme1}-\eqref{PDWG-scheme2}, for which the numerical analysis will be our future work.

\section*{Acknowledgement}
 We would like to express our gratitude to Dr. Junping Wang (NSF) for his valuable discussion and suggestions.

\newpage

\end{document}